\documentclass[11pt]{article}
\usepackage{amsmath}
\usepackage{amsthm}
\usepackage{amsfonts}
\usepackage{amssymb}
\usepackage{xcolor}
\usepackage{mathtools}
\usepackage[latin1]{inputenc}
\usepackage{hyperref}

\allowdisplaybreaks

\newtheorem{theorem}{Theorem}[section]
\newtheorem{proposition}[theorem]{Proposition}
\newtheorem{lemma}[theorem]{Lemma}
\newtheorem{corollary}[theorem]{Corollary}
\newtheorem{definition}[theorem]{Definition}

\newtheorem{example}[theorem]{Example}

\title{\bf Controllability of time-varying lumped semilinear
systems}

\author{
Hern\'an R. Henr\'{\i}quez\thanks{Departamento de Matem\'atica y Ciencia de la Computaci\'on, Universidad de Santiago de Chile, Casilla 307, Correo 2, Santiago, Chile. hernan.henriquez@usach.cl} and Matthieu F. Pinaud\thanks{
Departamento de Matem\'atica y Ciencia de la Computaci\'on, Universidad de Santiago de Chile, Casilla 307, Correo 2, Santiago, Chile. matthieu.pinaud@usach.cl}\\ \\ 
{\it Dedicated to the memory of Hern\'an R. Henr\'{\i}quez.}
}

\date{ }

\begin{document}
\maketitle
\begin{abstract}
In this work we are concerned with the controllability of time-varying lumped control systems
governed by a semilinear differential equation. We consider the semilinear system as a perturbation of a linear system. Assuming the underlying linear system is controllable, and the nonlinear forcing function satisfies a boundedness condition which is adapted to the underlying linear system, we show that the semilinear system is also approximately controllable
\end{abstract}
\bigskip
\noindent
\textbf{MSC 2020 subject classification}: 93B05, 93C10 (Primary); 93C15, 93C05 (Secondary).\\
\textbf{Keywords}: Controllability of systems; systems governed by ordinary differential equations; nonlinear systems.

\maketitle

\section{Introduction} \label{Intr}
Nowadays it is well known that the controllability of systems is a fundamental concept in the theory
of systems with many important applications. For this reason, over the past decades the issue of
controllability of finite dimensional systems has been studied by many authors using diverse techniques. In particular, the controllability of nonlinear systems has attracted the attention of numerous researchers since the 1960 and currently has an extensive literature. Many of the advances in the theory have already been incorporated into books. In relation to the objectives of this work, we will limit ourselves to mentioning \cite{1, 6, 8, 9, 12, 20, 26}, where the controllability of the systems described in form
\begin{equation*}
x^{\prime}(t)  =  f(t,x(t),u(t)),\quad x(t)\in \mathbb{R}^n, u(t)\in \mathbb{R}^m, 0\leq t\leq T, \tag{1.1}
\end{equation*}
is studied from a geometric point of view, for which a recurring hypothesis is that $f$ is a smooth function, usually of class $C^\infty$, or analytic. In \cite{6} geometric tools are used to analyze the controllability of nonlinear mechanical systems, and in \cite{8} the control sets are introduced and their properties are studied. In \cite{12} the controllability of the nonlinear system is discussed both by comparison with the linearized system and using geometric techniques. Monograph \cite{30} contains an overview of the main topics of mathematical control theory, especially controllability, stabilization and optimal control, for both linear and nonlinear systems, and for systems with states in spaces of finite and infinite dimension. Additionally, Chukwu \cite{7} establishes (1.1) the null controllability of the system $S$ under conditions in the Jacobian matrix of $f$, some stability properties and the controllability of a linear
system related to (1.1).\\

\noindent
The controllability of the semilinear system
\begin{equation*}
x^{\prime}(t)  =  A(t) x(t)+f(t,x(t))  + B(t) u(t),\quad 0 \leq  t \leq T, \tag{1.2}
\end{equation*}
with states $x(t) \in \mathbb{R}^n$ and controls $u(t) \in \mathbb{R}^m$ has been studied by several authors in recent years. In (1.2), $A(t)$, $B(t)$ are $n \times n$ and $n \times m$ matrices, respectively, $f\colon [0,T] \times \mathbb{R}^n \to \mathbb{R}^n$ is a function whose
properties will be specified later, and the control function $u \in L^p([0,T],\mathbb{R}^m)$ for $1 \leq p \leq \infty$. The general approach consists in to assume that the linear system
\begin{equation*}
x^{\prime}(t)  =  A(t) x(t)  + B(t) u(t),\quad 0 \leq  t \leq T, \tag{1.3}
\end{equation*}
is controllable and to determine conditions on f so that the system (1.2) is also controllable. In \cite{13} the authors show that if the system (1.3) is controllable and
\[ \lim_{\lVert (x,u)\lVert\to \infty} \frac{f(t,x,u)}{\lVert (x,u)\lVert}\to 0, \]
uniformly for $t \in [0,T]$, then system (1.2) is also controllable. Using the Schauder fixed point theorem it was shown in \cite{15} that if
\[
\lVert f(t,x(t),u(t)) \rVert \leq \sum_{i=1}^{q} \alpha_i(t)\phi_i(x,u), \quad 0\leq t\leq T
\]
where functions $\alpha_i(\cdot)$ and $\phi_i(\cdot)$ satisfy certain technical conditions, then the controllability of the system (1.3) implies the controllability of system (1.2). 
It is important to note here that several recent works study the controllability of fractional systems. In \cite{4,5}, their authors study the controllability of a semilinear fractional system of order $1<\alpha<2$. Expressing the solution in terms of the Mittag-Leffler matrix function, and using the Schauder fixed point theorem, their authors establish the controllability of the semilinear system. It is also worth to mention that several authors have studied the controllability of distributed semilinear systems, that is systems modeled in infinite dimensional spaces. The basic model considered in these works is
\begin{equation*}
x^{\prime}(t)  =  A  x(t) +f(t,x(t)) + Bu(t), \;\; 0 \leq  t \leq T, \tag{1.4}
\end{equation*}
where $x(t) \in X$, $u(t) \in U$, $X$, $U$ are Banach spaces, the operator $A$ is the infinitesimal generator of a strongly continuous semigroup on $X$ and $B \colon U \to X$ is a bounded linear operator. Zhou \cite{31} studies the approximate controllability of semilinear distributed systems of type (1.4) modeled in Hilbert spaces. He establishes the existence of mild solutions and sufficient conditions to guarantee the approximate controllability of system (1.4). In \cite{27} the controllability of system (1.4) is considered using fixed point techniques, and in \cite{14,25} the approximate controllability of systems of type (1.4) described in Hilbert spaces is studied. Under the assumption that the linear system is approximately controllable, Naito \cite{25} establishes the approximate controllability of (1.4) in terms of a geometric property of involved operators, and Do \cite{14} shows the approximate controllability of (1.4) when
\[ \lim_{\lVert x \lVert \to \infty} \frac{f(t,x)}{\lVert x \lVert}=\varepsilon,\]
for $\varepsilon > 0$ sufficiently small. In \cite{28} the controllability of a non-autonomous distributed system (in the sense that operator $A$ is time-dependent) is examined. Assuming that the linear system is controllable and that certain technical conditions are satisfied, including a boundedness condition for $f$, the author establishes the controllability of the system (1.4). This type of results has also been developed for more general systems. Using the Schauder fixed point theorem, in \cite{2} the authors study the controllability of a distributed integro-differential system. The paper \cite{3} presents a survey of the controllability properties for various distributed systems. Wang \cite{29} discuss the approximate controllability of a control system governed by a semilinear integro-differential equation. Under the condition that the corresponding linear system is approximately controllable and the function $f$ is bounded the author gets to establish the approximate controllability of the semilinear system. Using the Hilbert Uniqueness Method, the exact controllability of the wave equation is studied in \cite{32,33}. The author establishes the exact controllability of the semilinear equation when the nonlinearity $f$ is asymptotically linear or satisfies a given condition of growth at infinity. In \cite{17} the authors prove the approximate controllability for the semilinear heat equation assuming that the nonlinearity $f$ is globally Lipschitz continuous. Fern\'andez and Zuazua \cite{18} study the approximate controllability of the semilinear heat equation when the perturbation $f$ also depends on the temperature gradient. The approach in this case is based on optimization techniques. In \cite{10,11} the authors examine then exact boundary controllability of a semilinear heat equation and wave equation both in one dimension. The purpose of this work is to study the controllability of system (1.2).\\

\noindent
To obtain our results, in this work we assume that the underlying linear system is controllable in a sense that will be explained in Section 3, and that $f$ satisfies some boundedness condition. It is important to mention that we consider the system (1.2) as a perturbation of the linear system (1.3). In this work, this situation is reflected in two aspects. First, the existence of solutions of (1.2) depends on a boundedness property of $f$, which is not maintained if we redefine $f$ in the form $-A(t)x(t)+f(t,x)$. Something similar occurs with the controllability property. The controllability of system (1.2) depends on the controllability of system (1.3), a property that is not maintained if we redefine $f$.\\

\noindent
We next introduce some notations that will be widely used throughout the text. We denote by
$C([0,a],\mathbb{R}^n)$ the space of continuous functions from $[0,a]$ into $\mathbb{R}^n$ endowed with the norm of the uniform convergence $\lVert\cdot\rVert_\infty$; for a matrix $M$ or a linear operator $M$, we denote by $\operatorname{Im}(M)$ the range space of $M$, and by $\sigma(M)$ the set of eigenvalues of $M$. In this text we will consider the space $\mathbb{R}^k$ provided with the Euclidean norm which will be denoted by $\lVert\cdot\rVert$ independent of the dimension of space.
Additionally, the norm of an $l \times k$ matrix $M$, also denoted by $\lVert\cdot\rVert$, will be the norm of $M$ considered as a linear transformation from $\mathbb{R}^k$ into $\mathbb{R}^l$, that is 
\[\lVert M\rVert=\sup\{\lVert Mx\rVert:x\in\mathbb{R}^k,\ \lVert x\rVert\leq 1\}.\]
This work is organized in two parts. Section 2 contains preliminary aspects on the controllability and a geometric property of the trajectories of linear systems, and the results of the existence of solutions for the semilinear system. In Section 3 we study the controllability of the semilinear system (1.2). Expressing it in a very superficial way, the main result of this work is Theorem 3.3, which can be abbreviated as follows:
\begin{theorem}
Assume that the system (1.3) is uniformly controllable on $[0,T]$, the function $\lVert A(\cdot)\rVert$ is bounded on $[0,T]$, and technical conditions $(C1)$-$(C5)$ are satisfied. Then system (1.2) is approximately controllable on $[0,T]$.
\end{theorem}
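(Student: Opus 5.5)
Since (C1)--(C5) are not yet stated at this point, I will treat them as what is needed for the argument below: Carath\'eodory-type measurability and continuity of $f$, and a boundedness condition on $f$ adapted to the linear system. This boundedness condition is either a bound $\lVert f(t,x)\rVert\le \alpha(t)$ with $\alpha\in L^1$, or a sublinear or small-linear growth bound measured against the controllability constant of (1.3).

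The approach is to treat (1.2) as a perturbation of (1.3) and to set up a fixed point argument for the state–control pair. Let $\Phi(t,s)$ be the transition matrix of $A(\cdot)$. Since $\lVert A(\cdot)\rVert$ is bounded, $\Phi$ and $\Phi^{-1}$ are uniformly bounded on $[0,T]^2$. Uniform controllability of (1.3) gives that the Gramian
\[ W(0,T)=\int_0^T \Phi(T,s)B(s)B(s)^{*}\Phi(T,s)^{*}\,ds \]
is invertible with a quantitative bound. I expect "uniform" to mean uniform on subintervals $[\tau,T]$ as well, with $\lVert W(\tau,T)^{-1}\rVert$ controlled. Fix $x_0,x_1$ and $\varepsilon>0$. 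For $y\in C([0,T],\mathbb{R}^n)$ define the control
\[ u_y(t)=B(t)^{*}\Phi(T,t)^{*}W(0,T)^{-1}\Big(x_1-\Phi(T,0)x_0-\int_0^T\Phi(T,s)f(s,y(s))\,ds\Big), \]
and let $\Gamma y$ be the mild solution of the linear equation driven by $f(s,y(s))+B(s)u_y(s)$. By construction $(\Gamma y)(T)=x_1$ whenever the integrals make sense.

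The key steps, in order, are as follows. (i) Using the existence results of Section 2, state that (1.2) has a mild solution for each control, and record the variation of constants formula. (ii) Show that $\Gamma$ maps a closed ball $B_r\subset C([0,T],\mathbb{R}^n)$ into itself. Estimate $\lVert \Gamma y\rVert_\infty\le M_1\lVert x_0\rVert + M_2(\lVert x_1\rVert + \lVert \alpha\rVert_{L^1}) + M_3\int_0^T\lVert f(s,y(s))\rVert ds$, with constants built from the bounds on $\Phi$, $B$ and $W^{-1}$, and use the boundedness condition on $f$ to absorb the $f$-term. (iii) Show that $\Gamma$ is continuous (dominated convergence using the Carath\'eodory conditions) and compact (equicontinuity from the integral form plus Arzel\`a–Ascoli). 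Then apply Schauder to get a fixed point $x=\Gamma x$, which is a trajectory of (1.2) with $x(T)=x_1$. This would even give exact controllability. (iv) If the hypotheses only give approximate controllability, for instance if $f$ is merely bounded by a function adapted to the linear system but not integrable enough for the self-map estimate, fall back on the following route. Steer with the linear control on $[0,T-\delta]$ toward a suitable point, then use a short final interval $[T-\delta,T]$ on which the nonlinear contribution $\int_{T-\delta}^T\Phi(T,s)f\,ds$ is $O(\delta)$ small. Uniform controllability on $[T-\delta,T]$ then corrects the linear part, and the end state lands within $\varepsilon$ of $x_1$.

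The main obstacle is step (iv), and specifically the interplay between uniformity in the controllability hypothesis and the blow-up of $\lVert W(T-\delta,T)^{-1}\rVert$ as $\delta\to0$. The correcting control on the short interval grows like $\lVert W(T-\delta,T)^{-1}\rVert$. That control feeds back into the state, and hence into $f(s,x(s))$, so I must ensure the product (control size) $\times$ (nonlinear effect over length $\delta$) still tends to zero. My plan is to use the boundedness condition to make the nonlinear contribution independent of the state, of size $\int_{T-\delta}^T\alpha(s)ds$. This makes the error $\lVert\Phi\rVert_\infty\int_{T-\delta}^T\alpha$, which vanishes as $\delta\to0$ no matter how large the linear control is. If the conditions (C1)–(C5) instead involve growth in $x$, I would first apply a Gronwall estimate on $[T-\delta,T]$ to bound the state uniformly, then repeat the same argument.
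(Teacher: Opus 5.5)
There is a genuine gap, and it sits at the obstacle you flag in (iv). The paper's (C4) is not a state-independent bound. It reads $\lVert f(t,x)\rVert\le m_f(t)\Omega(\lVert x\rVert)$, where $\Omega$ is any nondecreasing continuous function with $\int_0^\infty d\xi/\Omega(\xi)=\infty$ (e.g.\ $1+\xi$ or $(\xi+e)\ln(\xi+e)$), and there is no smallness on $m_f$. So the invariant-ball estimate in (ii) is not available in general, and the Schauder route fails. In (iv), the step ``apply a Gronwall estimate on $[T-\delta,T]$ to bound the state uniformly'' is false. The exactly steered linear trajectory on $[T-\delta,T]$ is only bounded by $C+C\delta/\beta_{T-\delta}(\delta)$, where $\beta_{T-\delta}(\delta)$ is the lowest eigenvalue of $M_{T-\delta}$, and (cf.\ Proposition 2.4) it is in general unbounded as $\delta\to0$. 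Bihari's lemma then yields only $\lVert y\rVert\le\Psi\bigl(C+C\delta/\beta_{T-\delta}(\delta)\bigr)$, so $\int_{T-\delta}^T m_f\Omega(\lVert y\rVert)\,ds$ need not vanish. A growth bound plus Gronwall cannot suffice, because (C1)--(C4) alone do not imply the conclusion. Take $n=2$, $m=1$, $A=0$, $J=\begin{pmatrix}0&-1\\1&0\end{pmatrix}$, $p(t)=(\cos t,\sin t)^*$, $B(t)=Jp(t)$ and $f(t,x)=Jx$. The linear system is uniformly controllable, and (C1)--(C4) hold with $m_f\equiv1$, $\Omega(\xi)=1+\xi$. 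But $p'=Jp$ and $J^*=-J$ give $\frac{d}{dt}\,p^*x=0$ along every trajectory, so $R(T;x_0,f)$ lies on a line. Here $\beta_{T-\omega}(\omega)\approx\omega^3/12$, and (C5) fails.

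The missing ingredient is (C5), a compatibility condition between $\Omega$, $m_f$ near $T$ and the degeneration of the Gramian on terminal intervals. It says that for every $C_1\ge0$ and every continuous $\alpha_\omega$ with
\[
\alpha_\omega(t)\le C_\omega+C_1\frac{\omega}{\beta_{T-\omega}(\omega)}+N_\Phi\int_0^t m_f(s+T-\omega)\Omega(\alpha_\omega(s))\,ds,\qquad C_\omega\to0,
\]
one has $\int_0^\omega m_f(s+T-\omega)\Omega(\alpha_\omega(s))\,ds\to0$; it replaces the uniform bound you hoped for. Your architecture in (iv) matches the paper's: free evolution with $u=0$ on $[0,\omega_k]$, then linear steering on $[\omega_k,T]$. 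With your exact correction it closes. The function $\alpha_\delta(t)=\lVert y(t+T-\delta)\rVert$ satisfies the displayed inequality; the bounded constants can be absorbed into $C_1\delta/\beta_{T-\delta}(\delta)$, which is bounded below when $B$ is bounded. Then $\lVert y(T)-z\rVert\le N_\Phi\int_0^\delta m_f(s+T-\delta)\Omega(\alpha_\delta(s))\,ds\to0$. Keep the exact correction rather than the paper's regularized control with $\varepsilon=T-\omega_k$. That choice gives the uniform bound $N_1$, but its linear endpoint error in (3.9) does not vanish when $\beta_{\omega_k}(T-\omega_k)=O(T-\omega_k)$. For Example 3.4 with $f\equiv0$, it tends to $-\mathrm{diag}(1,\tfrac12)(z-e^{TA}x_0)$. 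This is also why the term $2C\gamma_k/\beta_{\omega_k}(T-\omega_k)$ in (3.16) does not go to $0$.
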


\section{Preliminaries} \label{Prel}
In the first part of this section we review briefly some basic aspects relative to the controllability of systems described by ordinary differential equations, and which are needed to establish our results. We refer the reader to \cite{21} for the controllability concepts.\\

\noindent
Throughout this work we will assume that $A(t)$ and $B(t)$ are matrices of order $n \times n$ and $n \times m$, respectively, and that the matrix functions $A\colon [0,T]\to\mathbb{R}^{n\times n}$,
$t\mapsto A(t)$, and $B\colon [0,T]\to\mathbb{R}^{n\times m}$, $t\mapsto B(t)$, are integrable. Let $h\colon [0,T]\to\mathbb{R}^n$ be an integrable function. Then for every $x_0\in\mathbb{R}^n$ there exists a unique absolutely continuous function $x(\cdot)$ which is solution of the system
\begin{align*}
    x'(t)&=A(t)x(t)+h(t),\quad t\in[0,T], \tag{2.1}\\
    x(0)&=x_0. \tag{2.2}
\end{align*}
Using the terminology of \cite{23}, we denote $\Phi(t,t_0)$ the fundamental solution matrix (also called transition matrix in \cite{21}) corresponding to the homogeneous equation
\[
x'(t)=A(t)x(t),\quad t\geq t_0.
\]
Using the fundamental solution matrix the solution of problem (2.1)-(2.2) is given by
\[
x(t)=\Phi(t,0)x_0+\int_0^t\Phi(t,s)h(s)\,ds.
\]
In this section we are concerned with the linear time-varying control system (1.3) where $u(t)\in\mathbb{R}^m$ is the control, and we take as admissible controls the functions $u\in L^p([0,T],\mathbb{R}^m)$ for a fixed $1\leq p\leq\infty$.
Let $q$ be the conjugate exponent of $p$. Henceforth the text we will assume that $B(\cdot)\in L^r([0,T],\mathbb{R}^{n\times m})$, where $r\geq\max\{p,q\}$. We denote by $x(t;t_0,x_0,u)$ for $t\geq t_0$ the solution of (1.3) with initial condition $x_0$ at $t=t_0$. When $t_0=0$, we write $x(t;x_0,u)$ instead of $x(t;0,x_0,u)$. From \cite{21}, we consider the
following notions of controllability.

\begin{definition}
The system (1.3) is said to be controllable in $[t_0,t_1]$, for $0\leq t_0\leq t_1\leq T$, if for every
$x_0,x_1\in\mathbb{R}^n$ there exists a control function $u\in L^p([t_0,t_1],\mathbb{R}^m)$ such that $x(t_1;t_0,x_0,u)=x_1$.
\end{definition}

\begin{definition}
The system (1.3) is said to be uniformly controllable in $[0,T]$ if it is controllable in
any interval $[t_0,t_1]$ for $0\leq t_0<t_1\leq T$.
\end{definition}

\noindent
The following result is well known (\cite[Theorem 1.3.1]{21}).

\begin{theorem}
The system (1.3) is controllable in $[t_0,t_1]$ if and only if the Gramian controllability
matrix
\[
W(t_1,t_0)=\int_{t_0}^{t_1}\Phi(t_1,s)B(s)B^*(s)\Phi^*(t_1,s)\,ds
\]
is positive definite.
\end{theorem}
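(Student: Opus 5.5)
We prove the two implications separately, using the variation of constants formula $x(t_1;t_0,x_0,u)=\Phi(t_1,t_0)x_0+\int_{t_0}^{t_1}\Phi(t_1,s)B(s)u(s)\,ds$. It reduces controllability in $[t_0,t_1]$ to surjectivity of the linear operator
\[
L u=\int_{t_0}^{t_1}\Phi(t_1,s)B(s)u(s)\,ds, \qquad L\colon L^p([t_0,t_1],\mathbb{R}^m)\to\mathbb{R}^n .
\]
Indeed, $x(t_1;t_0,x_0,u)=x_1$ is solvable in $u$ exactly when $x_1-\Phi(t_1,t_0)x_0\in\operatorname{Im}(L)$. Since $x_0,x_1$ are arbitrary, controllability is equivalent to $\operatorname{Im}(L)=\mathbb{R}^n$. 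The operator $L$ is well defined by Hölder's inequality: $\Phi(t_1,\cdot)$ is continuous, hence bounded, and $B\in L^r$ with $r\geq q$, so $\Phi(t_1,\cdot)B(\cdot)\in L^q$.

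\emph{Sufficiency.} Assume $W(t_1,t_0)$ is positive definite. Given $x_0,x_1$, set $\xi=W(t_1,t_0)^{-1}\bigl(x_1-\Phi(t_1,t_0)x_0\bigr)$ and take the explicit control
\[
u(s)=B^*(s)\Phi^*(t_1,s)\xi .
\]
This control belongs to $L^p$ because $B\in L^r$ with $r\geq p$ and $\Phi^*(t_1,\cdot)$ is bounded. Substituting it into the formula gives $Lu=W(t_1,t_0)\xi=x_1-\Phi(t_1,t_0)x_0$, so the trajectory reaches $x_1$ at time $t_1$.

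\emph{Necessity.} Assume $W(t_1,t_0)$ is not positive definite. Since $W(t_1,t_0)$ is symmetric and positive semidefinite, there is $\eta\neq 0$ with
\[
0=\eta^*W(t_1,t_0)\eta=\int_{t_0}^{t_1}\lVert B^*(s)\Phi^*(t_1,s)\eta\rVert^2\,ds .
\]
Hence $\eta^*\Phi(t_1,s)B(s)=0$ for almost every $s\in[t_0,t_1]$. It follows that $\eta^*Lu=0$ for every $u\in L^p$, so $\operatorname{Im}(L)\subset\eta^\perp\neq\mathbb{R}^n$. By the reduction above, the system is not controllable in $[t_0,t_1]$.

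The only delicate step is the integrability bookkeeping: checking that $L$ is defined on $L^p$ and that the Gramian control lies in $L^p$. This is exactly where the standing assumption $r\geq\max\{p,q\}$ is used. The algebraic core of the argument is routine.
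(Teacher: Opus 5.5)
Your proof is correct. The paper gives no proof of this theorem: it quotes it from \cite[Theorem 1.3.1]{21}. Your argument is the standard proof found there. For sufficiency you use the explicit Gramian control $u(s)=B^*(s)\Phi^*(t_1,s)W(t_1,t_0)^{-1}\bigl(x_1-\Phi(t_1,t_0)x_0\bigr)$. For necessity you take a null vector $\eta$ of $W(t_1,t_0)$, which annihilates the range of the input-to-state map $L$. Your $L^p$ bookkeeping via $r\geq\max\{p,q\}$ is correct, and since $\max\{p,q\}\geq 2$ it also guarantees $B\in L^2$, so $W(t_1,t_0)$ itself is well defined.
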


\noindent
We next fix $t_0=0$ and $t_1=\tau$. We abbreviate the notation by writing $\Phi(t,0)=\Phi(t)$. We define
the matrix
\[
M(\tau)=\int_0^\tau\Phi(s)^{-1}B(s)B^*(s)\Phi^*(s)^{-1}\,ds. \tag{2.3}
\]
Since the matrix $\Phi(t_1,t_0)$ is invertible for all $0\leq t_0\leq t_1$, for every $0\leq s\leq\tau$ we have
\[
\Phi(\tau)=\Phi(\tau,0)=\Phi(\tau,s)\Phi(s,0)=\Phi(\tau,s)\Phi(s)
\]
which implies that $\Phi(\tau,s)=\Phi(\tau)\Phi(s)^{-1}$. Hence
\[
W(\tau,0)
=\Phi(\tau)\int_0^\tau\Phi(s)^{-1}B(s)B^*(s)\Phi^*(s)^{-1}\,ds\,\Phi(\tau)^*
=\Phi(\tau)M(\tau)\Phi(\tau)^*.
\]
It follows from the preceding expression and Theorem 2.3 that the system (1.3) is controllable in $[0,\tau]$ if and only if the matrix $M(\tau)$ is positive definite. Assume the system (1.3) is controllable on $[0,\tau]$ for all $\tau>0$. Let $x_0,x_1\in\mathbb{R}^n$ be fixed. We
consider the set consisting of trajectories $x(t;x_0,u)$ such that $x(\tau;x_0,u)=x_1$, and we denote by $U(\tau;x_0,x_1)$ the set formed by the corresponding admissible controls $u\in L^p([0,\tau],\mathbb{R}^m)$. We also denote by $B_\tau:L^p([0,\tau],\mathbb{R}^m)\to\mathbb{R}^n$ the bounded linear operator given by
\[
B_\tau(u)=\int_0^\tau B(s)u(s)\,ds.
\]
It is clear that $\operatorname{Im}(B_\tau)$ is a closed vector subspace of $\mathbb{R}^n$. We can establish the following property of trajectories.

\begin{proposition}
Assume that the matrix functions $A:[0,T]\to\mathbb{R}^{n\times n}$, $t\mapsto A(t)$, and $B:
[0,T]\to\mathbb{R}^{n\times m}$, $t\mapsto B(t)$, are integrable. If $d(x_1-x_0,\operatorname{Im}(B_{\tau_0}))>d_0\geq 0$, for some $\tau_0>0$,
then 
\[ \lim_{\tau\to 0^+} \sup\{\lVert x(t;x_0,u)\rVert:0\leq t\leq\tau,u\in U(\tau;x_0,x_1)\}=\infty.\]
\end{proposition}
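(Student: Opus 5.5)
Argue by contradiction: assume there exist $K>0$ and a sequence $\tau_k\to 0^+$, together with controls $u_k\in U(\tau_k;x_0,x_1)$, such that $\lVert x(t;x_0,u_k)\rVert\le K$ for $0\le t\le\tau_k$. (We may assume $U(\tau_k;x_0,x_1)\neq\emptyset$ for every $k$, by controllability; if it is empty the supremum is conventionally $-\infty$ or the statement is vacuous.) Write $x_k(t)=x(t;x_0,u_k)$. Integrating (1.3) over $[0,\tau_k]$ gives the identity
\[
x_1-x_0=\int_0^{\tau_k}A(s)x_k(s)\,ds+\int_0^{\tau_k}B(s)u_k(s)\,ds .
\]
The first integral is bounded in norm by $K\int_0^{\tau_k}\lVert A(s)\rVert\,ds$, which tends to $0$ as $k\to\infty$ because $\lVert A(\cdot)\rVert$ is integrable. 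Hence $d\bigl(x_1-x_0,\{B_{\tau_k}(u)\}\bigr)\to 0$, and the task is to compare $\operatorname{Im}(B_{\tau_k})$ with $\operatorname{Im}(B_{\tau_0})$.

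The key observation is that $\tau\mapsto\operatorname{Im}(B_\tau)$ is monotone. For $\tau\le\tau_0$, any $u\in L^p([0,\tau],\mathbb{R}^m)$ extends by zero to an element of $L^p([0,\tau_0],\mathbb{R}^m)$ with the same value of $\int B u$. Therefore $\operatorname{Im}(B_\tau)\subseteq\operatorname{Im}(B_{\tau_0})$. For $k$ large we have $\tau_k\le\tau_0$, so
\[
d\bigl(x_1-x_0,\operatorname{Im}(B_{\tau_0})\bigr)\le \Bigl\lVert x_1-x_0-\int_0^{\tau_k}B(s)u_k(s)\,ds\Bigr\rVert\le K\int_0^{\tau_k}\lVert A(s)\rVert\,ds\longrightarrow 0.
\]
This contradicts $d(x_1-x_0,\operatorname{Im}(B_{\tau_0}))>d_0\ge 0$. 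It follows that for every $K$ there is $\delta>0$ such that, for all $\tau<\delta$ and every $u\in U(\tau;x_0,x_1)$, we have $\sup_{[0,\tau]}\lVert x(t;x_0,u)\rVert> K$. This is the claimed limit.

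The main point to check is the monotonicity step and the uniform smallness of $\int_0^\tau\lVert A\rVert$. Both are elementary: the first follows from extending controls by zero, and the second from absolute continuity of the integral. A quantitative version is also available. If $\sup_{[0,\tau]}\lVert x\rVert\le K$, then $d_0<K\int_0^\tau\lVert A(s)\rVert\,ds$, which yields the explicit lower bound
\[
\sup\{\lVert x(t;x_0,u)\rVert: 0\le t\le\tau\}\ge \frac{d_0}{\int_0^\tau\lVert A(s)\rVert\,ds},
\]
and this bound tends to $\infty$ when $d_0>0$. When $d_0=0$, replace $d_0$ by the actual positive distance $d(x_1-x_0,\operatorname{Im}(B_{\tau_0}))$. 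If $\int_0^\tau\lVert A\rVert=0$, the hypothesis is immediately contradicted, so no $u$ exists and the supremum is over an empty set; this degenerate case is handled by the same convention as above.
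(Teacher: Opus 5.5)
Your proof is correct and follows the paper's argument: integrate (1.3) over $[0,\tau]$, use $\operatorname{Im}(B_\tau)\subseteq\operatorname{Im}(B_{\tau_0})$ to bound the distance from below, and estimate $\bigl\lVert\int_0^\tau A(s)x(s)\,ds\bigr\rVert\le\int_0^\tau\lVert A(s)\rVert\,ds\,\sup_{[0,\tau]}\lVert x\rVert$ with $\int_0^\tau\lVert A\rVert\to 0$. Your contradiction framing is equivalent to the paper's direct bound, which is exactly your quantitative version; your remarks on zero extension (justifying the inclusion) and on using the actual positive distance when $d_0=0$ fill in small points the paper leaves implicit.
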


\begin{proof}
Let $x(t)=x(t;x_0,u)$ for $u\in U(\tau;x_0,x_1)$. It follows from (1.3) that
\[
x(\tau)-x_0=x_1-x_0=\int_0^\tau A(s)x(s)\,ds+\int_0^\tau B(s)u(s)\,ds.
\]
Since $\operatorname{Im}(B_\tau)\subseteq\operatorname{Im}(B_{\tau_0})$ for $0<\tau\leq\tau_0$, we obtain that
\[
d_0<\lVert x_1-x_0-B_\tau(u)\rVert
=\left\lVert\int_0^\tau A(s)x(s)\,ds\right\rVert
\leq\int_0^\tau\lVert A(s)\rVert\,ds\sup_{0\leq s\leq\tau}\lVert x(s)\rVert.
\]
Using now that $\lVert A(\cdot)\rVert$ is integrable, we have that $\int_0^\tau\lVert A(s)\rVert\,ds\to 0$ as $\tau\to 0$, and combining with the
above inequality follows that $\sup_{0\leq s\leq\tau}\lVert x(s)\rVert\to\infty$ as $\tau\to 0$.
\end{proof}
\noindent
This property has a particularly simple expression for invariant linear systems
\[
x'(t)=Ax(t)+Bu(t),\quad t\in[0,T], \tag{2.4}
\]
where $A,B$ are $n\times n$ and $n\times m$ constant matrices, respectively.

\begin{corollary}
For the invariant system (2.4), if $d(x_1-x_0,\operatorname{Im}(B))>d_0\geq 0$, then 
\[ \lim_{\tau\to 0^+} \sup\{\lVert x(t;x_0,u)\rVert:0\leq t\leq\tau,u\in U(\tau;x_0,x_1)\}=\infty.\]
\end{corollary}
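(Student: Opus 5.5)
The plan is to obtain Corollary 2.6 as the special case of Proposition 2.5 with constant matrix functions $A(t)\equiv A$ and $B(t)\equiv B$. Constant functions are integrable on $[0,T]$, so the proposition applies. What must be checked is that its hypothesis $d(x_1-x_0,\operatorname{Im}(B_{\tau_0}))>d_0$ for some $\tau_0>0$ follows from $d(x_1-x_0,\operatorname{Im}(B))>d_0$.

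The key step is therefore to identify $\operatorname{Im}(B_{\tau_0})$ in the invariant case. When $B$ is constant,
\[
B_{\tau_0}(u)=\int_0^{\tau_0}Bu(s)\,ds=B\int_0^{\tau_0}u(s)\,ds,
\]
so $\operatorname{Im}(B_{\tau_0})\subseteq\operatorname{Im}(B)$. Conversely, for any $v\in\mathbb{R}^m$, the constant control $u(s)=v/\tau_0$ belongs to $L^p([0,\tau_0],\mathbb{R}^m)$ and gives $B_{\tau_0}(u)=Bv$. Hence $\operatorname{Im}(B_{\tau_0})=\operatorname{Im}(B)$ for every $\tau_0>0$. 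The inclusion alone is already enough, since a smaller set has larger distance:
\[
d(x_1-x_0,\operatorname{Im}(B_{\tau_0}))\geq d(x_1-x_0,\operatorname{Im}(B))>d_0 .
\]

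Taking any $\tau_0\in(0,T]$, Proposition 2.5 then yields the stated limit directly. If one prefers not to cite the proposition, its proof can be repeated verbatim. From
\[
x_1-x_0-B\int_0^\tau u(s)\,ds=\int_0^\tau Ax(s)\,ds
\]
we get the bound $d_0<\tau\lVert A\rVert\sup_{0\leq s\leq\tau}\lVert x(s)\rVert$. This forces $\sup_{0\leq s\leq\tau}\lVert x(s)\rVert>d_0/(\tau\lVert A\rVert)$, which tends to $\infty$ as $\tau\to0^+$.

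Two points need brief comment, though neither is a real obstacle. First, if $A=0$, the bound forces $d_0<0$, which contradicts $d_0\geq0$. So in that case $U(\tau;x_0,x_1)$ is empty, and the statement holds with the convention $\sup\emptyset=\infty$, exactly as implicitly in Proposition 2.5. Second, the strict inequality $d_0<\lVert x_1-x_0-Bw\rVert$, with $w=\int_0^\tau u(s)\,ds$, is justified because $Bw\in\operatorname{Im}(B)$.
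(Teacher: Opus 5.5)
Your argument is correct and matches the paper's intended derivation: the corollary is Proposition 2.4 specialized to constant $A,B$, and the only thing to check is $\operatorname{Im}(B_{\tau_0})\subseteq\operatorname{Im}(B)$ (in fact equality), which you do. One small slip, shared with the paper's own proof of Proposition 2.4, is in your optional direct argument: if $d_0=0$ the bound $d_0/(\tau\lVert A\rVert)$ does not blow up, so replace $d_0$ by $\delta=d(x_1-x_0,\operatorname{Im}(B))>0$ to get $\sup_{0\leq s\leq\tau}\lVert x(s)\rVert\geq\delta/(\tau\lVert A\rVert)\to\infty$; also, the paper's standing assumption that the system is controllable on every $[0,\tau]$ makes $U(\tau;x_0,x_1)$ nonempty, so you need no convention for $\sup\emptyset$.
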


Not to doubt that this property is difficult to understand when we compare it with our intuitive idea
of controllability applied to concrete systems. The reason is due to the following well known property, which allows us to justify our intuition through the idea of approximate control. For completeness we first recall the following property of matrices. For a proof the reader can see \cite{22}.

\begin{lemma}
Let $M$ be a $n\times n$ positive definite matrix. Then $\lVert M\rVert=\sup\sigma(M)$.
\end{lemma}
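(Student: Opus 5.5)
The plan is to prove the statement through the spectral theorem for real symmetric matrices, since a positive definite matrix is in particular symmetric. The first step is to fix an orthonormal basis $\{v_1,\dots,v_n\}$ of $\mathbb{R}^n$ consisting of eigenvectors of $M$, with $Mv_i=\lambda_i v_i$. Because $M$ is positive definite, every $\lambda_i>0$. The set $\sigma(M)=\{\lambda_1,\dots,\lambda_n\}$ is finite, so $\sup\sigma(M)=\max_i\lambda_i=:\lambda_{\max}$, attained at some eigenvector $v_k$.

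Next comes the lower bound. Since $\lVert v_k\rVert=1$, the definition of the operator norm gives $\lVert M\rVert\geq\lVert Mv_k\rVert=\lambda_k\lVert v_k\rVert=\lambda_{\max}$.

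Then the upper bound. Take $x\in\mathbb{R}^n$ with $\lVert x\rVert\leq 1$ and expand it as $x=\sum_i c_i v_i$, so that $\sum_i c_i^2\leq 1$ by Parseval's identity in the Euclidean norm. Then $Mx=\sum_i\lambda_i c_i v_i$, and orthonormality gives
\[
\lVert Mx\rVert^2=\sum_{i=1}^n\lambda_i^2c_i^2\leq\lambda_{\max}^2\sum_{i=1}^n c_i^2\leq\lambda_{\max}^2 .
\]
This inequality uses $\lambda_i^2\leq\lambda_{\max}^2$, which holds because all eigenvalues are nonnegative. Taking the supremum over the unit ball yields $\lVert M\rVert\leq\lambda_{\max}$, and combining with the lower bound proves $\lVert M\rVert=\sup\sigma(M)$.

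The only delicate point is that the argument depends on symmetry, that is, on the convention that positive definite means symmetric with $\langle Mx,x\rangle>0$. This is the convention in force for the Gramian matrices $W(\tau,0)$ and $M(\tau)$ in (2.3), which are manifestly symmetric. Without symmetry the statement fails, so the proof will state this convention explicitly. Positivity is needed only to ensure $\lambda_{\max}=\max_i|\lambda_i|$. Otherwise the proof is routine.
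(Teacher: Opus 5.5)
Your spectral-theorem argument is correct and complete; the paper gives no proof of this lemma (it only refers to the literature), and the orthonormal-eigenbasis argument you give is the standard one. Your insistence on reading ``positive definite'' as including symmetry is well placed, since the identity fails for nonsymmetric matrices with $\langle Mx,x\rangle>0$ (e.g.\ $\begin{pmatrix}1&1\\0&1\end{pmatrix}$ has $\sigma=\{1\}$ but norm $(1+\sqrt{5})/2$). The assumption is harmless in the paper because $M(\tau)$ and $(\varepsilon I+M(\tau))^{-1}$ are symmetric.
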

\noindent
Combining Theorem 2.3 and Lemma 2.6 the next result follows. This result is well known, however,
for completeness of the text we will include a brief idea of its demonstration.

\begin{corollary}
The system (1.3) is controllable in $[0,\tau]$ if and only if
\[
\lim_{\varepsilon\to 0}\varepsilon(\varepsilon I+M(\tau))^{-1}=0.
\]
\end{corollary}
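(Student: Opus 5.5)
We will use Theorem 2.3 together with the factorization $W(\tau,0)=\Phi(\tau)M(\tau)\Phi(\tau)^*$ derived above. Since $\Phi(\tau)$ is invertible, these give that system (1.3) is controllable in $[0,\tau]$ if and only if $M(\tau)$ is positive definite. Hence it suffices to show that $M(\tau)$ is positive definite exactly when $\varepsilon(\varepsilon I+M(\tau))^{-1}\to 0$ as $\varepsilon\to 0^+$. The basic observation is that $M=M(\tau)$ is symmetric and positive semidefinite, because
\[
\langle M x,x\rangle=\int_0^\tau\lVert B^*(s)\Phi^*(s)^{-1}x\rVert^2\,ds\geq 0 .
\]
So $M$ can be orthogonally diagonalized: $M=Q\,\mathrm{diag}(\lambda_1,\dots,\lambda_n)\,Q^*$ with $\lambda_i\geq 0$. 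For every $\varepsilon>0$ the matrix $\varepsilon I+M$ is positive definite and hence invertible, and
\[
\varepsilon(\varepsilon I+M)^{-1}=Q\,\mathrm{diag}\Bigl(\frac{\varepsilon}{\varepsilon+\lambda_1},\dots,\frac{\varepsilon}{\varepsilon+\lambda_n}\Bigr)Q^*.
\]

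\emph{Sufficiency of positive definiteness.} Suppose $M$ is positive definite, and let $\lambda_{\min}>0$ be its smallest eigenvalue. The matrix $\varepsilon(\varepsilon I+M)^{-1}$ is positive definite, with eigenvalues $\varepsilon/(\varepsilon+\lambda_i)$. By Lemma 2.6 its norm is the largest of these eigenvalues, so
\[
\lVert\varepsilon(\varepsilon I+M)^{-1}\rVert=\frac{\varepsilon}{\varepsilon+\lambda_{\min}}\leq\frac{\varepsilon}{\lambda_{\min}}\longrightarrow 0 .
\]

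\emph{Necessity.} Suppose $M$ is not positive definite. Since $M$ is semidefinite, some eigenvalue vanishes, so there is a unit vector $x$ with $Mx=0$. Then $(\varepsilon I+M)x=\varepsilon x$, which gives $\varepsilon(\varepsilon I+M)^{-1}x=x$. Consequently $\lVert\varepsilon(\varepsilon I+M)^{-1}\rVert\geq 1$ for every $\varepsilon>0$, and the limit cannot be $0$.

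Combining the two directions with the equivalence established at the start proves the corollary. The argument has no real obstacle. The only point needing care is that invertibility of $\varepsilon I+M$ relies on $M$ being semidefinite; this comes from the integral formula above and must be established before the spectral computation.
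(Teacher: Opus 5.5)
Your proof is correct and follows essentially the same spectral route as the paper. You reduce to positive definiteness of $M(\tau)$ via $W(\tau,0)=\Phi(\tau)M(\tau)\Phi(\tau)^*$, then read off $\lVert\varepsilon(\varepsilon I+M(\tau))^{-1}\rVert$ from the eigenvalues $\varepsilon/(\varepsilon+\lambda)$ using Lemma 2.6. Your converse, using a unit vector in $\ker M(\tau)$, is a concrete form of the paper's eigenvalue bound. You also usefully make explicit that $M(\tau)$ is positive semidefinite, which the paper uses without stating it, both to invert $\varepsilon I+M(\tau)$ and to apply Lemma 2.6.
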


\begin{proof}
It is easy to see that $\lambda\in\sigma(M(\tau))$ if and only if $\frac{1}{\varepsilon+\lambda}\in
\sigma((\varepsilon I+M(\tau))^{-1})$. Assume that system
(1.3) is controllable in $[0,\tau]$. This implies that $M(\tau)$ is positive definite. Consequently, there exists $\lambda_0>0$ such that $\lambda\geq\lambda_0$ for all $\lambda\in\sigma(M(\tau))$. Therefore, from Lemma 2.6 we obtain that
\[
\left\lVert\varepsilon(\varepsilon I+M(\tau))^{-1}\right\rVert
\leq\frac{\varepsilon}{\varepsilon+\lambda_0}\to 0,\quad \varepsilon\to 0.
\]
Reciprocally, using again Lemma 2.6, for every $\lambda\in\sigma(M(\tau))$,
\[
\frac{\varepsilon}{\varepsilon+\lambda}\leq\lVert\varepsilon(\varepsilon I+M(\tau))^{-1}\rVert\to 0,\quad \varepsilon\to 0.
\]
This implies that there exists $\varepsilon_0>0$ such that $\lambda\geq\varepsilon_0$, which in turn implies that the matrix $M(\tau)$ is positive definite.
\end{proof}
\noindent
We also need to study the controllability of translated systems. Let $0\leq\omega<T$. We begin by
observing that the fundamental matrix $\Phi_\omega(t,s)$ corresponding to equation
\[ x'(t)=A(t+\omega)x(t),\quad t\in[0,T-\omega],\]
is given by
\[ \Phi_\omega(t,s)=\Phi(t+\omega,s+\omega).\]
Furthermore, related with the controllability of the translated system
\[ x'(t)=A(t+\omega)x(t)+B(t+\omega)u(t),\quad t\in[0,T-\omega], \tag{2.5} \]
we have the following property.

\begin{proposition}
Assume that the system (1.3) is uniformly controllable in $[0,T]$. Then the system
(2.5) is controllable in $[0,T-\omega]$ for all $0\leq\omega<T$.
\end{proposition}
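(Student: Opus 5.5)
The plan is to reduce controllability of the translated system (2.5) on $[0,T-\omega]$ to controllability of (1.3) on the interval $[\omega,T]$, using the identity $\Phi_\omega(t,s)=\Phi(t+\omega,s+\omega)$ noted just before the statement together with Theorem 2.3.

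First, I compute the controllability Gramian of (2.5) on $[0,T-\omega]$. By Theorem 2.3 applied to (2.5), with $A_\omega(t)=A(t+\omega)$ and $B_\omega(t)=B(t+\omega)$, this Gramian is
\[
W_\omega(T-\omega,0)=\int_0^{T-\omega}\Phi(T,s+\omega)B(s+\omega)B^*(s+\omega)\Phi^*(T,s+\omega)\,ds .
\]
The substitution $\sigma=s+\omega$ turns this into
\[
\int_\omega^{T}\Phi(T,\sigma)B(\sigma)B^*(\sigma)\Phi^*(T,\sigma)\,d\sigma=W(T,\omega).
\]
The translated data still satisfy the standing hypotheses: $A_\omega$ is integrable and $B_\omega\in L^r$ on $[0,T-\omega]$, so Theorem 2.3 does apply to (2.5).

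Next, since $0\le\omega<T$, uniform controllability of (1.3) (Definition 2.2) gives controllability of (1.3) on $[\omega,T]$. Theorem 2.3 then says $W(T,\omega)$ is positive definite. Hence the Gramian of (2.5) is positive definite, and a second application of Theorem 2.3 gives controllability of (2.5) on $[0,T-\omega]$.

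As an alternative that avoids Gramians, one can argue directly with trajectories. Given $x_0,x_1$, take a control $v\in L^p([\omega,T],\mathbb{R}^m)$ with $x(T;\omega,x_0,v)=x_1$ for (1.3), and set $u(t)=v(t+\omega)$. Then $y(t)=x(t+\omega;\omega,x_0,v)$ solves (2.5) with $y(0)=x_0$ and $y(T-\omega)=x_1$, and $u\in L^p([0,T-\omega],\mathbb{R}^m)$.

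The only point needing care is verifying that $\Phi_\omega(t,s)=\Phi(t+\omega,s+\omega)$ really is the transition matrix of the shifted equation, which was stated before the proposition. Given that, the argument is essentially a change of variables.
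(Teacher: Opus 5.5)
Your proof is correct. The paper states this proposition without proof. Your Gramian computation $W_\omega(T-\omega,0)=W(T,\omega)$, obtained from $\Phi_\omega(t,s)=\Phi(t+\omega,s+\omega)$ and the substitution $\sigma=s+\omega$, is the change of variables the paper builds into formula (2.6) for $M_\omega(\tau)$. Combined with controllability of (1.3) on the single interval $[\omega,T]$ and Theorem 2.3, it is the natural intended justification.

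Your trajectory-shift alternative is also valid, and it does not need the transition-matrix identity at all. It only uses that $t\mapsto x(t+\omega;\omega,x_0,v)$ solves (2.5) with control $v(\cdot+\omega)$, together with uniqueness of solutions of the linear problem.
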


Let $M_\omega(\tau)$ with $0<\tau\leq T-\omega$ be the matrix defined in (2.3) for the system (2.5). This means that
\[ M_\omega(\tau)=\int_\omega^{\omega+\tau}\Phi(s,\omega)^{-1}B(s)B^*(s)\Phi^*(s,\omega)^{-1}\,ds. \tag{2.6} \]
In what follows we abbreviate the notation by writing
\[ M_\omega=M_\omega(T-\omega)=\int_\omega^T\Phi(s,\omega)^{-1}B(s)B^*(s)\Phi^*(s,\omega)^{-1}\,ds. \tag{2.7} \]
We denote by $\beta_\omega(\tau)$ the lowest eigenvalue of $M_\omega(\tau)$. As a consequence, for $\varepsilon>0$, we have that
\[ \lVert(\varepsilon I+M_\omega)^{-1}\rVert\leq\frac{1}{\varepsilon+\beta_\omega(T-\omega)}. \]
We now consider the semilinear equation
\[ x'(t)=A(t)x(t)+f(t,x(t))+h(t),\quad t\in[0,T], \tag{2.8}, \]
where $h:[0,T]\to\mathbb{R}^n$ is an integrable function and $f:[0,T]\times\mathbb{R}^n\to\mathbb{R}^n$ satisfies the following Carath\'eodory conditions:
\begin{itemize}
\item[\textbf{(C1)}] The function $f(\cdot,x):[0,T]\to\mathbb{R}^n$ is integrable for each $x\in\mathbb{R}^n$.
\item[\textbf{(C2)}] The function $f(t,\cdot):\mathbb{R}^n\to\mathbb{R}^n$ is continuous for each $t\in[0,T]$.
\end{itemize}
\noindent
In the literature there are numerous results on the existence and uniqueness of solutions for equation
(2.8) with initial condition (2.2). In what follows, we will present a result that is appropriate for
developing our future goals. For this reason, we introduce the following conditions.

\begin{itemize}
\item[\textbf{(C3)}] The function $f$ is locally Lipschitz continuous. That is, for every $x_0\in\mathbb{R}^n$ and $r>0$ there is
an integrable function $L_r:[0,T]\to[0,\infty)$ such that
\[ \lVert f(t,x_1)-f(t,x_2)\rVert\leq L_r(t)\lVert x_1-x_2\rVert \]
for all $x_i\in\mathbb{R}^n$ such that $\lVert x_i-x_0\rVert\leq r$ for $i=1,2$.
\item[\textbf{(C4)}] There exists a nondecreasing continuous function $\Omega:[0,\infty)\to[0,\infty)$ such that the function
$\phi:[0,\infty)\to[0,\infty)$ given by
\[
\phi(s)=\int_0^s\frac{1}{\Omega(\xi)}\,d\xi,\quad s\geq 0,
\]
is surjective, and there is an integrable function $m_f:[0,T]\to[0,\infty)$ such that
\[
\lVert f(t,x)\rVert\leq m_f(t)\Omega(\lVert x\rVert),
\]
for all $t\in[0,T]$ and $x\in\mathbb{R}^n$. 
\end{itemize}

\noindent
In the rest of this text, we will represent by $\mathcal{C}$ the class of functions $\Omega(\cdot)$ that satisfy the conditions
involved in (C4).

\begin{example}
The function $\Omega(s)=(s+e)\ln(s+e)$ satisfies the conditions involved in (C4). In fact,
$\Omega$ is a nondecreasing continuous function, and
\[
\phi(s)=\int_0^s\frac{1}{(\xi+e)\ln(\xi+e)}\,d\xi
=\ln\bigl(\ln(s+e)\bigr),\quad s\geq 0,
\]
is well defined and surjective from $[0,\infty)$ into $[0,\infty)$.
\end{example}

\noindent
Our result of existence of solutions for equation (2.8) is based on the following property of fixed
point, known as the Leray-Schauder's Alternative theorem (\cite[Theorem 6.5.4]{19}), and on a generalization due to Bihari of the Gronwall-Bellman lemma (see \cite[Theorem 4]{16}). However, since the proof in \cite[Theorem 4]{16} contains some confusing aspects, we will include a proof for completeness. In the rest of this work we denote by $N_\Phi$ a constant such that $\lVert\Phi(t,s)\rVert\leq N_\Phi$ for all $s,t\in[0,T]$.

\begin{theorem}
Let $D$ be a closed convex subset of a Banach space $(X,\lVert\cdot\rVert_X)$ and assume that
$0\in D$. Let $\Gamma:D\to D$ be a completely continuous map, then the set
$\{x\in D:x=\lambda\Gamma(x)$, for some $0<\lambda<1\}$ is unbounded or the map $\Gamma$ has a fixed point in $D$.
\end{theorem}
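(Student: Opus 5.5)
We argue by contradiction, reducing to the Schauder fixed point theorem (in the form: a continuous map of a nonempty closed convex subset $K$ of a Banach space into a relatively compact subset of $K$ has a fixed point) by means of a radial retraction. Suppose that $\Gamma$ has no fixed point in $D$ and that the set
\[
E=\{x\in D: x=\lambda\Gamma(x)\ \text{for some}\ 0<\lambda<1\}
\]
is bounded, say $\lVert x\rVert_X\leq R$ for all $x\in E$, with some $R>0$. We aim to produce an element of $E$ of norm $R+1$, which is impossible.

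\textbf{Step 1: the retraction.} Define $\rho:X\to X$ by $\rho(y)=y$ if $\lVert y\rVert_X\leq R+1$ and $\rho(y)=(R+1)y/\lVert y\rVert_X$ otherwise. This map is continuous, and its image lies in the closed ball $\overline{B}_{R+1}$. Since $D$ is convex and $0\in D$, every $y\in D$ satisfies $\rho(y)=\mu y+(1-\mu)0\in D$ with $\mu\in(0,1]$. Hence $\rho(D)\subseteq K:=D\cap\overline{B}_{R+1}$. The set $K$ is closed, convex and nonempty, since it contains $0$.

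\textbf{Step 2: applying Schauder.} Consider $\widetilde\Gamma=\rho\circ\Gamma:K\to K$. It is continuous as a composition of continuous maps. Because $\Gamma$ is completely continuous and $K$ is bounded, $\Gamma(K)$ is relatively compact. Continuity of $\rho$ then gives that $\rho(\overline{\Gamma(K)})$ is compact, so $\widetilde\Gamma(K)$ is relatively compact in $K$. This step, checking that composing with $\rho$ preserves the compactness needed for Schauder, is the only point requiring care; it reduces to the elementary fact that continuous images of compact sets are compact. Schauder's theorem yields $x\in K$ with $x=\rho(\Gamma(x))$.

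\textbf{Step 3: the dichotomy.} If $\lVert\Gamma(x)\rVert_X\leq R+1$, then $x=\Gamma(x)$ is a fixed point of $\Gamma$ in $D$, contradicting our assumption. Otherwise $x=\lambda\Gamma(x)$ with $\lambda=(R+1)/\lVert\Gamma(x)\rVert_X\in(0,1)$. Then $x\in E$ and $\lVert x\rVert_X=R+1>R$, contradicting the bound on $E$. Therefore either $E$ is unbounded or $\Gamma$ has a fixed point in $D$.
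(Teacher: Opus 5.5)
The paper gives no proof of this statement. It is the Leray--Schauder alternative, quoted from \cite[Theorem 6.5.4]{19} and used as a black box in the proof of Theorem 2.12, so your argument supplies something the paper omits.

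Your proof is the standard radial-retraction reduction to Schauder's theorem, and it is correct when ``completely continuous'' is read as in the proof of Theorem 2.12, namely continuous and mapping bounded sets to relatively compact sets. Each hypothesis is used where it should be:
\begin{itemize}
\item convexity of $D$ together with $0\in D$ gives $\rho(D)\subseteq K=D\cap\overline{B}_{R+1}$;
\item closedness of $D$ makes $K$ closed, so the closed-convex form of Schauder's theorem applies;
\item boundedness of $K$ is exactly what allows you to invoke complete continuity of $\Gamma$;
\item the case split on $\lVert\Gamma(x)\rVert_X$ yields either a fixed point or an element of $E$ of norm $R+1$.
\end{itemize}

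One cosmetic point: assuming that $\Gamma$ has no fixed point is superfluous. Your argument shows directly that if $E$ is bounded, then $\Gamma$ has a fixed point in $D$.
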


\begin{lemma}
Assume that $\Omega(\cdot)\in\mathcal{C}$. For every constant $C>0$ there exists a constant $\Psi(C)$ having the following property: let $\alpha(\cdot)\in C([0,T])$ be a positive function that satisfies
\[ \alpha(t)\leq C+N_\Phi\int_0^t m_f(s)\Omega(\alpha(s))\,ds,\quad 0\leq t\leq T,\]
then $\alpha(t)\leq\Psi(C)$ for all $t\in[0,T]$.
\end{lemma}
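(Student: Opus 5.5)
The plan is to compare $\alpha$ with the solution of the scalar comparison equation defined by the right-hand side, and then use $\phi$ to integrate that comparison equation explicitly. Fix $C>0$ and set
\[
v(t)=C+N_\Phi\int_0^t m_f(s)\Omega(\alpha(s))\,ds,\qquad 0\le t\le T .
\]
Then $v$ is absolutely continuous, nondecreasing, satisfies $v(0)=C>0$, and $\alpha(t)\le v(t)$ by hypothesis. Since $\Omega$ is nondecreasing, for almost every $t$ we obtain
\[
v'(t)=N_\Phi m_f(t)\Omega(\alpha(t))\le N_\Phi m_f(t)\Omega(v(t)).
\]

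First I will check that $\Omega(\xi)>0$ for $\xi>0$, so that $\phi$ is well defined and strictly increasing on $[C,\infty)$. This follows from the implicit assumption in (C4) that $1/\Omega$ is locally integrable on $(0,\infty)$. If $\Omega(0)=0$, the integral near $0$ may be improper, but this is harmless because we only use $\phi$ on $[C,\infty)$ with $C>0$. It is safer to work with $G(s)=\int_C^s d\xi/\Omega(\xi)=\phi(s)-\phi(C)$, which is finite for $s\ge C$. Since $v$ takes values in $[C,\infty)$, where $1/\Omega$ is continuous and bounded on compact sets, the chain rule for the composition of a $C^1$ function with an absolutely continuous one applies. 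This gives
\[
\frac{d}{dt}G(v(t))=\frac{v'(t)}{\Omega(v(t))}\le N_\Phi m_f(t)\quad\text{a.e.},
\]
and integrating yields
\[
\phi(v(t))\le \phi(C)+N_\Phi\int_0^T m_f(s)\,ds=:K(C).
\]

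The main point is that surjectivity of $\phi$ is exactly what makes this bound invertible. Because $\phi$ is continuous and strictly increasing on $[C,\infty)$, and its range contains $[\phi(C),\infty)$ by surjectivity, there is a unique $\Psi(C)\ge C$ with $\phi(\Psi(C))=K(C)$. Monotonicity then gives $\alpha(t)\le v(t)\le\Psi(C)=\phi^{-1}(K(C))$ for all $t\in[0,T]$. The constant $\Psi(C)$ depends only on $C$, $N_\Phi$, $\|m_f\|_{L^1}$ and $\Omega$, and not on $\alpha$, which is what the statement requires.

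The step most likely to need care is the differentiation of $\phi\circ v$, that is, justifying the a.e. chain rule and positivity of $\Omega$ on the relevant range. If needed, I will sidestep it with an elementary alternative: compare $v$ on the subinterval where $v<\Psi(C)$ and argue by contradiction at the first time $v$ would exceed $\Psi(C)$.
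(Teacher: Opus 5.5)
Your proposal is correct and follows essentially the same route as the paper. The paper sets $\eta(t)=N_\Phi\int_0^t m_f(s)\Omega(\alpha(s))\,ds$ (so your $v=C+\eta$), uses $\eta'\le N_\Phi m_f\,\Omega(C+\eta)$ a.e., integrates $\eta'/\Omega(C+\eta)$ to get $\phi(C+\eta(t))\le\phi(C)+N_\Phi\int_0^T m_f(s)\,ds$, and inverts $\phi$; your added remarks on positivity of $\Omega$ and the a.e. chain rule simply fill in details the paper leaves implicit.
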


\begin{proof}
Let $\eta(t)=N_\Phi\int_0^t m_f(s)\Omega(\alpha(s))\,ds$. Then
\[ \eta'(t)=N_\Phi m_f(t)\Omega(\alpha(t))\leq N_\Phi m_f(t)\Omega(C+\eta(t)),\quad\text{a.e.}
\]
It follows that
\[ \frac{\eta'(t)}{\Omega(C+\eta(t))}\leq N_\Phi m_f(t), \]
which implies that
\[ \int_C^{C+\eta(t)}\frac{d\xi}{\Omega(\xi)} =\phi(C+\eta(t))-\phi(C) \leq N_\Phi\int_0^T m_f(s)\,ds.\]
Hence, we obtain that
\[ \phi(C+\eta(t))\leq\phi(C)+N_\Phi\int_0^T m_f(s)\,ds. \]
Since the function $\phi$ has a continuous and nondecreasing inverse $\phi^{-1}:[0,\infty)\to[0,\infty)$, the above inequality allows us to state that
\[ \alpha(t)\leq\Psi(C) =\phi^{-1}\left(\phi(C)+N_\Phi\int_0^T m_f(s)\,ds\right) \]
which completes the proof.
\end{proof}

\noindent
Throughout this paper, $B_r(x,X)$ will denote the closed ball with center at $x$ and radius $r>0$ in
a Banach space $X$.

\begin{theorem}
Assume conditions (C1)-(C4) are fulfilled, and $h\in L^1([0,T],\mathbb{R}^n)$. Then problem
(2.8)-(2.2) has a unique solution.
\end{theorem}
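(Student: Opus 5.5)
Existence will come from the Leray--Schauder alternative (Theorem 2.10) applied to the integral formulation, with Lemma 2.11 supplying the a priori bound. Uniqueness will come from the local Lipschitz condition (C3) together with Gronwall's inequality.

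On $X=C([0,T],\mathbb{R}^n)$ with $\lVert\cdot\rVert_\infty$, and $D=X$ (closed, convex, containing $0$), I define
\[
\Gamma(x)(t)=\Phi(t,0)x_0+\int_0^t\Phi(t,s)h(s)\,ds+\int_0^t\Phi(t,s)f(s,x(s))\,ds .
\]
A continuous fixed point of $\Gamma$ is exactly an absolutely continuous solution of (2.8)--(2.2), by the variation of constants formula for (2.1)--(2.2) with $h$ replaced by $f(\cdot,x(\cdot))+h$.

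To see that $\Gamma$ is well defined, note that for $x\in X$ the map $s\mapsto f(s,x(s))$ is measurable by (C1)--(C2), the Carath\'eodory conditions. It is bounded by $m_f(s)\Omega(\lVert x\rVert_\infty)$ by (C4), hence integrable, so $\Gamma(x)$ is continuous in $t$.

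\textbf{Complete continuity.} This is the step needing the most care.
\begin{itemize}
\item Continuity: if $x_k\to x$ uniformly, then $f(s,x_k(s))\to f(s,x(s))$ pointwise by (C2). The dominating function is $m_f(s)\Omega(R)$, with $R$ a common bound for the $x_k$ and $\Omega$ nondecreasing. Dominated convergence together with $\lVert\Phi\rVert\le N_\Phi$ gives $\Gamma(x_k)\to\Gamma(x)$ uniformly.
\item Boundedness on bounded sets: on $B_R(0,X)$, $\Gamma$ is bounded by $N_\Phi(\lVert x_0\rVert+\lVert h\rVert_1+\Omega(R)\lVert m_f\rVert_1)$.
\item Equicontinuity on bounded sets: I write $\Phi(t,s)=\Phi(t,0)\Phi(s,0)^{-1}$, so that
\[
\int_0^t\Phi(t,s)g(s)\,ds=\Phi(t,0)\int_0^t\Phi(s,0)^{-1}g(s)\,ds .
\]
Here $\Phi(\cdot,0)$ is uniformly continuous on $[0,T]$. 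The integrals $\int_0^t\Phi(s,0)^{-1}g(s)\,ds$ are equicontinuous because $\lVert g(s)\rVert\le m_f(s)\Omega(R)+\lVert h(s)\rVert$ is a fixed integrable majorant, so absolute continuity of the integral applies.
\end{itemize}
Arzel\`a--Ascoli then yields complete continuity.

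\textbf{A priori bound.} Suppose $x=\lambda\Gamma(x)$ with $0<\lambda<1$. Then
\[
\lVert x(t)\rVert\le C+N_\Phi\int_0^t m_f(s)\Omega(\lVert x(s)\rVert)\,ds,\qquad C=N_\Phi(\lVert x_0\rVert+\lVert h\rVert_1).
\]
Set $\alpha(t)=\lVert x(t)\rVert+\delta$ for some small $\delta>0$; this makes $\alpha$ positive, and since $\Omega$ is nondecreasing the same inequality holds for $\alpha$ with $C+\delta$ in place of $C$. Lemma 2.11 then gives $\alpha\le\Psi(C+\delta)$, so the set in Theorem 2.10 is bounded. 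Hence $\Gamma$ has a fixed point, which proves existence.

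\textbf{Uniqueness.} Let $x,y$ be two solutions. Both are continuous, so both lie in a ball $\lVert z\rVert\le r$ about $0$. Condition (C3) then provides an integrable $L_r$ with
\[
\lVert x(t)-y(t)\rVert\le N_\Phi\int_0^t L_r(s)\lVert x(s)-y(s)\rVert\,ds .
\]
The integral form of Gronwall's inequality gives $x\equiv y$. If only the local-in-time version is preferred, the same argument applied on the maximal interval of agreement also works.

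The main technical obstacle is the equicontinuity and continuity of $\Gamma$, since $A$ is merely integrable; the factorization of $\Phi(t,s)$ handles this. The a priori estimate itself is immediate from Lemma 2.11.
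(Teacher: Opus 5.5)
Your proposal is correct and follows the same route as the paper. Both use the Leray--Schauder alternative (Theorem 2.10) for the variation-of-constants operator $\Gamma$ on $C([0,T],\mathbb{R}^n)$, complete continuity via dominated convergence and Arzel\`a--Ascoli, the a priori bound from Lemma 2.11, and uniqueness from (C3). Your extra details simply fill in steps the paper leaves implicit: the factorization $\Phi(t,s)=\Phi(t,0)\Phi(s,0)^{-1}$ for equicontinuity, the shift by $\delta>0$ so that the positivity and $C>0$ hypotheses of Lemma 2.11 are actually met, and the explicit Gronwall argument for uniqueness.
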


\begin{proof}
We define the map $\Gamma:C([0,T],\mathbb{R}^n)\to C([0,T],\mathbb{R}^n)$ by
\[
\Gamma x(t)=\Phi(t,0)x_0+\int_0^t\Phi(t,s)f(s,x(s))\,ds
+\int_0^t\Phi(t,s)h(s)\,ds,\quad 0\leq t\leq T.
\]
Applying the dominated convergence theorem of Lebesgue we can affirm that $\Gamma$ is continuous.
We next prove that $\Gamma$ is completely continuous. Initially we observe that
\[
\lVert f(s,x(s))\rVert\leq m_f(s)\Omega(\lVert x\rVert_\infty),
\]
for $x\in C([0,T],\mathbb{R}^n)$ and $s\in[0,T]$. Let $r>0$ and $B_r=B_r(0,C([0,T],\mathbb{R}^n))$. Using the above estimate we can show that the set $\Gamma(B_r)$ is bounded and equicontinuous. The Ascoli-Arzel\`a theorem allows us to conclude that $\Gamma(B_r)$ is relatively compact in $C([0,T],\mathbb{R}^n)$. Thus, $\Gamma$ is completely continuous. To finish the proof, we establish a priori estimates for the solution of the integral equation $\lambda\Gamma x=x$. Let $x_\lambda$ such that $\lambda\Gamma x_\lambda=x_\lambda$. For $t\geq 0$, we have
\[
\lVert x_\lambda(t)\rVert
\leq N_\Phi\lVert x_0\rVert+N_\Phi\int_0^T\lVert h(s)\rVert\,ds
+N_\Phi\int_0^t m_f(s)\Omega(\lVert x_\lambda(s)\rVert)\,ds.
\]
Applying Lemma 2.11, for a fixed function $h$, we obtain that the set
\[ \{x_\lambda:0<\lambda<1,\lambda x=\lambda\Gamma x\}\] 
is bounded in $C([0,T],\mathbb{R}^n)$. Applying now Theorem 2.10 we obtain the existence of a fixed point for $\Gamma$, and as a consequence, the existence of a solution for problem (2.8)-(2.2).
Finally, the uniqueness of the solution is a direct consequence of condition (C3).
\end{proof}

\section{Approximate controllability of nonlinear systems} \label{Contr}
This section is devoted to study the controllability of system (1.2). To establish our controllability
result for the semilinear system, we use a technique developed in \cite[Theorem 1]{29}. In this result
the controllability of the semilinear system is obtained assuming that the underlying linear system is
controllable and that $f$ is bounded. In this section we shall show that the nonlinear system is approximately controllable when the linear system is uniformly controllable and $f$ satisfies a boundedness condition which is related with the linear system. Our results are based on the existence of certain bounded trajectories of the linear system which will be defined below. 

\noindent
In this section we keep all the notations introduced in the Section 2. We assume that $A(\cdot)$, $B(\cdot)$ and $f$ satisfy the conditions considered in Section 2. Let $x(\cdot;x_0,f,u)$ be the solution of system (1.2)-(2.2) for a control function $u\in L^p([0,\tau],\mathbb{R}^m)$. We denote 
\[ R(\tau;x_0,f)=\{x(\tau;x_0,f,u):u\in L^p([0,\tau],\mathbb{R}^m)\}\] 
the set formed by the reachable states of system (1.2)-(2.2) in the interval $[0,\tau]$.

\begin{definition}
The system (1.2) is said to be approximately controllable in $[0,T]$ if the set $R(T,x_0,f)$
is dense in $\mathbb{R}^n$ for all $x_0\in\mathbb{R}^n$.
\end{definition}

\noindent
Our next objective is to study in greater detail some trajectories of the linear system (1.3). Let
$x_0,z\in\mathbb{R}^n$. Using the Corollary 2.7 and the Lemma 2.6, and choosing an appropriate control that depends on $x_0$ and $z$, we will estimate the size of the corresponding trajectory of (1.3), and we will establish the order of approximation of the trajectory to $z$. Later we will use these properties to compare with the trajectories of the nonlinear system (1.2). Assume that system (1.3) is controllable in $[0,\tau]$. Let $z\in\mathbb{R}^n$ and $\varepsilon>0$. We consider the control function
\[
u_\varepsilon(t)
=B^*(t)\Phi^*(t)^{-1}(\varepsilon I+M(\tau))^{-1}
\Phi(\tau)^{-1}(z-\Phi(\tau)x_0),\quad 0\leq t\leq\tau. \tag{3.1}
\]
In what follows we denote by $\beta(\tau)$ the lowest eigenvalue of $M(\tau)$. Moreover, to abbreviate the text, we denote by $C>0$ a generic constant independent of $\varepsilon>0$ and $\tau$ for $0<\tau\leq T$. We observe that $u_\varepsilon\in L^p([0,\tau],\mathbb{R}^m)$ and it follows from Lemma 2.6 that,
\[
\lVert u_\varepsilon(t)\rVert
\leq\frac{C}{\varepsilon+\beta(\tau)}\lVert B^*(t)\rVert.
\]
The corresponding trajectory of the system (1.3) is given by

\begin{align*}
x_\varepsilon(t)
&=x(t,x_0,u_\varepsilon)\\
&=\Phi(t)x_0+\int_0^t\Phi(t)\Phi(s)^{-1}B(s)B^*(s)\Phi^*(s)^{-1}
(\varepsilon I+M(\tau))^{-1}\Phi(\tau)^{-1}(z-\Phi(\tau)x_0)\,ds.
\end{align*}

In particular,

\begin{align*}
x_\varepsilon(\tau)
&=\Phi(\tau)x_0 +\int_0^\tau\Phi(\tau)\Phi(s)^{-1}B(s)B^*(s)\Phi^*(s)^{-1}
(\varepsilon I+M(\tau))^{-1}\Phi(\tau)^{-1}(z-\Phi(\tau)x_0)\,ds\\
&=\Phi(\tau)x_0+\Phi(\tau)M(\tau)(\varepsilon I+M(\tau))^{-1}
\Phi(\tau)^{-1}(z-\Phi(\tau)x_0)\\
&=\Phi(\tau)x_0+\Phi(\tau)[I-\varepsilon(\varepsilon I+M(\tau))^{-1}]
\Phi(\tau)^{-1}(z-\Phi(\tau)x_0)\\
&=z-\Phi(\tau)\varepsilon(\varepsilon I+M(\tau))^{-1}
\Phi(\tau)^{-1}(z-\Phi(\tau)x_0). 
\end{align*}

Hence 
\[ \lim_{\varepsilon\to 0} x_\varepsilon(\tau) = \lim_{\varepsilon\to 0} \Big( z-\Phi(\tau)\varepsilon(\varepsilon I+M(\tau))^{-1}
\Phi(\tau)^{-1}(z-\Phi(\tau)x_0)\Big) = z.\] 
Furthermore, using again Lemma 2.6 we obtain
\[
\lVert x_\varepsilon(\tau)-z\rVert
\leq C\frac{\varepsilon}{\varepsilon+\beta(\tau)}
\lVert z-\Phi(\tau)x_0\rVert.
\]
We can estimate the trajectory $x_\varepsilon(\cdot)$ as
\begin{align*}
    \lVert x_\varepsilon(t)-z\rVert &\leq\lVert x_\varepsilon(\tau)-z\rVert +\lVert x_\varepsilon(\tau)-x_\varepsilon(t)\rVert \\
    &\leq C\left[\frac{\varepsilon}{\varepsilon+\beta(\tau)}
+\frac{1}{\varepsilon+\beta(\tau)}\nu(\tau,t)\right]
\lVert z-\Phi(\tau)x_0\rVert+\lVert\Phi(\tau)x_0-\Phi(t)x_0\rVert,
\end{align*}
for $0\leq t\leq\tau$, where
\[
\nu(\tau,t)=\lVert\Phi(\tau)M(\tau)-\Phi(t)M(t)\rVert.
\]
which shows that for fixed $\varepsilon>0$ the trajectories $x_\varepsilon(\cdot)$ are bounded and, consequently, they behave
differently from that studied in Proposition 2.4.
When the function $\lVert A(\cdot)\rVert$ is bounded on $[0,\tau]$, there exists a constant $L>0$ such that $\lVert\Phi(t_2)-
\Phi(t_1)\rVert\leq L\lvert t_2-t_1\rvert$ for all $t_1,t_2\in[0,T]$. This allows us to reduce the preceding estimate to
\[
\lVert x_\varepsilon(t)-z\rVert
\leq C\left[\frac{\varepsilon}{\varepsilon+\beta(\tau)}
+\frac{\tau-t}{\varepsilon+\beta(\tau)}\right]
\lVert z-\Phi(\tau)x_0\rVert+\lVert\Phi(\tau)x_0-\Phi(t)x_0\rVert,
\quad 0\leq t\leq\tau. \tag{3.2}
\]
To establish our result about controllability of system (1.2), we need to introduce a certain boundedness condition. In the next statement $\beta_{T-\omega}(\omega)$ denotes the lowest eigenvalue of the matrix $M_{T-\omega}$,
which is defined in (2.7).

\textbf{(C5)} Let $C_1\geq 0$ be a constant, and let $\alpha_\omega:[0,\omega]\to[0,\infty)$, $\omega>0$, be a continuous function such
that
\[
\alpha_\omega(t)\leq C_\omega+C_1\frac{\omega}{\beta_{T-\omega}(\omega)}
+N_\Phi\int_0^t m_f(s+T-\omega)\Omega(\alpha_\omega(s))\,ds,
\quad 0\leq t\leq\omega, \tag{3.3}
\]
where $C_\omega\to 0$ as $\omega\to 0$. Then
\[
\int_0^\omega m_f(s+T-\omega)\Omega(\alpha_\omega(s))\,ds\to 0
\quad\text{as }\omega\to 0.
\]

\begin{example}
Assume that $\beta_{T-\omega}(\omega)=\omega^3$, $m_f=1$ and $\Omega(\xi)=\xi^\theta$ with $0<\theta<1/2$. The inequality (3.3) is reduced to
\[
\alpha_\omega(t)\leq C_\omega+\frac{C_1}{\omega^2}
+N_\Phi\int_0^t\alpha_\omega(s)^\theta\,ds,\quad 0\leq t\leq\omega.
\]
Applying the Bihari inequality (see \cite[Theorem 4]{16}), we can affirm that
\[
\alpha_\omega(t)^{1-\theta}
\leq\left(C_\omega+\frac{1}{\omega^2}C_1\right)^{1-\theta}
+(1-\theta)N_\Phi t,\quad 0\leq t\leq\omega. \tag{3.4}
\]
Let $\gamma=\dfrac{\theta}{1-\theta}$. Since $\theta<1/2$ it follows that $\gamma<1$, which implies that $(a+b)^\gamma\leq a^\gamma+b^\gamma$ for all $a,b>0$. Applying this property to (3.4), we obtain that
\[
\alpha_\omega(t)^\theta
\leq\left[\left(C_\omega+\frac{1}{\omega^2}C_1\right)^{1-\theta}
+(1-\theta)N_\Phi t\right]^\gamma
\leq\left(C_\omega+\frac{1}{\omega^2}C_1\right)^\theta
+(1-\theta)^\gamma N_\Phi^\gamma t^\gamma.
\]
This implies that
\[
\int_0^\omega\alpha_\omega(s)^\theta\,ds
\leq(1-\theta)^{\gamma+1}N_\Phi^\gamma\omega^{\frac{1}{1-\theta}}
+\omega C_\omega^\theta+C_1^\theta\omega^{1-2\theta}
\to 0,\quad\omega\to 0.
\]
Consequently, condition (C5) holds.
\end{example}
\noindent
We are in a position to state the main result of this paper.
\begin{theorem}
Assume that the system (1.3) is uniformly controllable on $[0,T]$, the function $\lVert A(\cdot)\rVert$
is bounded on $[0,T]$, and conditions (C1)-(C5) are fulfilled. Then system (1.2) is approximately
controllable on $[0,T]$.
\end{theorem}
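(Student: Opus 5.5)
The plan is a two-stage strategy in the spirit of \cite[Theorem 1]{29}: first let the nonlinear system run freely, then steer during a short final interval. Fix $x_0,z\in\mathbb{R}^n$ and $\delta>0$. For small $\omega>0$ we apply the zero control on $[0,T-\omega]$. By Theorem 2.12 the system (1.2) has a unique solution $y(\cdot)$ there, and Lemma 2.11 bounds it by a constant independent of $\omega$. Set $y_\omega=y(T-\omega)$; these points stay in a fixed ball. On $[T-\omega,T]$ we work with the translated system (2.5) with shift $T-\omega$. Proposition 2.8 makes it controllable on $[0,\omega]$, with Gramian $M_{T-\omega}(\omega)$ and lowest eigenvalue $\beta_{T-\omega}(\omega)$. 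In local time $t\in[0,\omega]$ we use the control (3.1) built for this system with initial state $y_\omega$, target $z$, and $\varepsilon$ to be chosen. Call it $u_{\varepsilon,\omega}$, and let $x_{\varepsilon,\omega}$ be the resulting linear trajectory.

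Next we compare the nonlinear and linear trajectories. Let $w$ be the solution of the translated (1.2) on $[0,\omega]$ with the same control and initial value $y_\omega$; it exists by Theorem 2.12. By variation of constants,
\[ w(t)-x_{\varepsilon,\omega}(t)=\int_0^t\Phi(t+T-\omega,s+T-\omega)f(s+T-\omega,w(s))\,ds, \]
so $\lVert w(\omega)-x_{\varepsilon,\omega}(\omega)\rVert\le N_\Phi\int_0^\omega m_f(s+T-\omega)\Omega(\lVert w(s)\rVert)\,ds$. To bound $\lVert w(s)\rVert$ we use (3.2) with $\tau=\omega$ and the bounded $\lVert A\rVert$. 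It gives $\lVert x_{\varepsilon,\omega}(t)\rVert\le C+C\omega/(\varepsilon+\beta_{T-\omega}(\omega))\le C+C_1\omega/\beta_{T-\omega}(\omega)$, uniformly in $\varepsilon$, since $\lVert z-\Phi y_\omega\rVert$ is bounded. Then $\alpha_\omega(t)=\lVert w(t)\rVert$ satisfies an inequality of the type (3.3).

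The main obstacle is that (C5) requires the constant $C_\omega\to0$, while the crude bound above gives only $C+\dots$ with $C$ not small. I would first fix this by subtracting the target: apply the estimate to $\lVert w(t)-z\rVert$, or rather absorb the bounded part. The term $\lVert\Phi(\tau)y_\omega-\Phi(t)y_\omega\rVert\le L\omega\lVert y_\omega\rVert$ is $O(\omega)$. The term $\varepsilon/(\varepsilon+\beta)$ tends to $0$ if we tie $\varepsilon=\varepsilon(\omega)=\omega\beta_{T-\omega}(\omega)$, say. This puts the $\lVert z\rVert$-sized term in $\alpha_\omega$ into the admissible form by reading (C5) with $\alpha_\omega$ measuring deviation from $z$ (or replacing $\Omega(\cdot)$ by $\Omega(\lVert z\rVert+\cdot)$, which preserves the class $\mathcal{C}$). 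Then (C5) yields $\int_0^\omega m_f(s+T-\omega)\Omega(\alpha_\omega(s))\,ds\to0$ as $\omega\to0$.

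The conclusion then follows from the triangle inequality. We have $\lVert x(T;x_0,f,u)-z\rVert\le\lVert w(\omega)-x_{\varepsilon,\omega}(\omega)\rVert+\lVert x_{\varepsilon,\omega}(\omega)-z\rVert$. The second term is at most $C\varepsilon/(\varepsilon+\beta_{T-\omega}(\omega))$, which is small for the chosen $\varepsilon$; alternatively, for fixed $\omega$ we let $\varepsilon\to0$ via Corollary 2.7 once the first term is already below $\delta/2$ uniformly in $\varepsilon$. Choosing $\omega$ small and then $\varepsilon$ gives distance less than $\delta$, with the control equal to $0$ on $[0,T-\omega]$ and $u_{\varepsilon,\omega}(\cdot-(T-\omega))$ afterwards. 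This control lies in $L^p$ because $B\in L^r$. Hence $R(T;x_0,f)$ is dense.
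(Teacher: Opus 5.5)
Your architecture (free evolution on $[0,T-\omega]$, the regularized control (3.1) on $[T-\omega,T]$, variation of constants for the nonlinear correction, then (C5)) is sound, and you locate the difficulty correctly. However, neither of your two ways of putting the estimate into the form (3.3) works.

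First route: if you take $\alpha_\omega(t)=\lVert w(t)-z\rVert$, the integrand in the comparison estimate is still $\Omega(\lVert w(s)\rVert)$. The bound $\lVert w\rVert\le\lVert z\rVert+\alpha_\omega$ only yields $\Omega(\lVert z\rVert+\alpha_\omega)$, not $\Omega(\alpha_\omega)$. The paper bridges exactly this with $\Omega(\lVert y\rVert)\le\Omega(\lVert y-z\rVert)+\Omega(N_1)$, where $N_1$ bounds the nonlinear trajectories uniformly in $k$. With your choice $\varepsilon=\omega\beta_{T-\omega}(\omega)$ no such bound is available: your own estimate $C+C_1\omega/\beta_{T-\omega}(\omega)$ blows up whenever $\beta_{T-\omega}(\omega)=o(\omega)$ (it is of order $\omega^3$ in Example 3.4).

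Second route: replacing $\Omega$ by $\Omega(\lVert z\rVert+\cdot)$ fails for another reason. (C5) is a hypothesis on the given pair $(m_f,\Omega)$, not a property of the class $\mathcal{C}$. If you try to deduce it for the shifted function from (C5) for $\Omega$ (put $\alpha=\lVert z\rVert+\tilde\alpha$), the non-vanishing constant $\lVert z\rVert$ reappears in front---the very obstacle you started from.

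The missing observation is that a bounded constant can be absorbed into the term $C_1\omega/\beta_{T-\omega}(\omega)$. By Lemma 2.6, $\beta_{T-\omega}(\omega)\le\lVert M_{T-\omega}\rVert\le N_\Phi^2\lVert B\rVert_\infty^2\,\omega$, so $\omega/\beta_{T-\omega}(\omega)\ge c_0>0$. Boundedness of $B$ is already implicit in your use of (3.2), whose factor $\tau-t$ needs $\lVert M(\tau)-M(t)\rVert\le C(\tau-t)$.

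Hence, with $\alpha_\omega(t)=\lVert w(t)\rVert$, your inequality $\alpha_\omega(t)\le K+C\omega/\beta_{T-\omega}(\omega)+N_\Phi\int_0^t m_f(s+T-\omega)\Omega(\alpha_\omega(s))\,ds$ is (3.3) with $C_\omega=0$ and $C_1=C+K/c_0$. These constants do not depend on $\varepsilon$. (C5) then gives $\lVert w(\omega)-x_{\varepsilon,\omega}(\omega)\rVert\to0$ for every choice $\varepsilon=\varepsilon(\omega)>0$, and your triangle-inequality conclusion goes through without measuring deviation from $z$ at all.

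Do keep a choice with $\varepsilon/\beta_{T-\omega}(\omega)\to0$. The paper's choice $\varepsilon=T-\omega_k$ is what produces its bound $N_1$. But by the same inequality $\beta\le N_\Phi^2\lVert B\rVert_\infty^2\omega$, it keeps $\varepsilon/(\varepsilon+\beta)$ bounded away from $0$. So the linear endpoint error in (3.9) does not tend to zero, and the term $2C\gamma_k/\beta_{\omega_k}(T-\omega_k)$ left in (3.16) at $\tau=T-\omega_k$ does not vanish---a point the paper's final step passes over.
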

\begin{proof}
Let $z\in\mathbb{R}^n$. In this proof we abbreviate our notation by writing $y(t,x_0,u)$ for the solution of (1.2)-(2.2). Let $(\omega_k)_k$, with $0<\omega_k<T$, be a nondecreasing sequence convergent to $T$. Since $y(\cdot,x_0,0)$ is a continuous function on $[0,T]$, there exists a constant $C\geq 0$ such that $\lVert y(\omega_k,x_0,0)\rVert\leq C$ for all $k\in\mathbb{N}$. We fix $\varepsilon>0$. We consider the linear system (2.5) with $\omega_k$ instead $\omega$ and initial condition $y(\omega_k,x_0,0)$. That is, we study the controllability of the system
\begin{align*}
x'(t) &= A(t+\omega_k)x(t)+B(t+\omega_k)u(t),\quad t\in[0,T-\omega_k], \tag{3.5}\\
x(0) &= y(\omega_k,x_0,0). \tag{3.6}
\end{align*}
Motivated by (3.1), we define the control function $v_\varepsilon^k:[0,T-\omega_k]\to \mathbb{R}^m$ given by
\[
v_\varepsilon^k(t)
=B^*(t+\omega_k)\Phi^*(t+\omega_k,\omega_k)^{-1}
(\varepsilon I+M_{\omega_k})^{-1}\Phi(T,\omega_k)^{-1}
\bigl(z-\Phi(T,\omega_k)y(\omega_k,x_0,0)\bigr), \tag{3.7}
\]
where $M_{\omega_k}$ is defined in (2.7) with $\omega_k$ instead of $\omega$. Let
\[
z^k=\Phi(T,\omega_k)^{-1}\bigl(z-\Phi(T,\omega_k)y(\omega_k,x_0,0)\bigr).
\]
The solution $x_\varepsilon^k(t)$ of system (3.5)-(3.6) with the control $u(t)=v_\varepsilon^k(t)$ is given by
\[
x_\varepsilon^k(t)
=\Phi(t+\omega_k,\omega_k)y_k
+\int_0^t\Phi(t+\omega_k,s+\omega_k)B(s+\omega_k)v_\varepsilon^k(s)\,ds,
\]
and substituting $v_\varepsilon^k(s)$ yields

\begin{align*}
x_\varepsilon^k(t) &=\Phi(t+\omega_k,\omega_k)y(\omega_k,x_0,0)\\
&+\int_0^t\Phi(t+\omega_k,s+\omega_k)B(s+\omega_k)B^*(s+\omega_k)
\Phi^*(s+\omega_k,\omega_k)^{-1}(\varepsilon I+M_{\omega_k})^{-1}z^k\,ds\\
=&\Phi(t+\omega_k,\omega_k)y(\omega_k,x_0,0)\\
&+\int_0^t\Phi(t+\omega_k,\omega_k)\Phi(s+\omega_k,\omega_k)^{-1}B(s+\omega_k)B^*(s+\omega_k)
\Phi^*(s+\omega_k,\omega_k)^{-1}(\varepsilon I+M_{\omega_k})^{-1}z^k\,ds\\
=&\Phi(t+\omega_k,\omega_k)y(\omega_k,x_0,0)
+\Phi(t+\omega_k,\omega_k)M_{\omega_k}(\varepsilon I+M_{\omega_k})^{-1}
\Phi(T,\omega_k)^{-1}\bigl(z-\Phi(T,\omega_k)y(\omega_k,x_0,0)\bigr)\\
=&\Phi(t+\omega_k,\omega_k)y(\omega_k,x_0,0)
+\Phi(t+\omega_k,\omega_k)\bigl[I-\varepsilon(\varepsilon I+M_{\omega_k})^{-1}\bigr]
\Phi(T,\omega_k)^{-1}\bigl(z-\Phi(T,\omega_k)y(\omega_k,x_0,0)\bigr)\\
=&\Phi(t+\omega_k,\omega_k)\Phi(T,\omega_k)^{-1}z -\Phi(t+\omega_k,\omega_k)\varepsilon(\varepsilon I+M_{\omega_k})^{-1}
\Phi(T,\omega_k)^{-1}\bigl(z-\Phi(T,\omega_k)y(\omega_k,x_0,0)\bigr).
\end{align*}

From the last term it follows that

\begin{align*}
x_\varepsilon^k(s-\omega_k)-z
=&\Phi(s,\omega_k)\Phi(T,\omega_k)^{-1}z-\Phi(s,\omega_k)^{-1}z\\
&-\Phi(s,\omega_k)\varepsilon(\varepsilon I+M_{\omega_k})^{-1}
\Phi(T,\omega_k)^{-1}\bigl(z-\Phi(T,\omega_k)y(\omega_k,x_0,0)\bigr)
\end{align*}

for $\omega_k\leq s\leq T$. In particular,
\[
x_\varepsilon^k(T-\omega_k)
=z-\Phi(T,\omega_k)\varepsilon(\varepsilon I+M_{\omega_k})^{-1}
\Phi(T,\omega_k)^{-1}\bigl(z-\Phi(T,\omega_k)y(\omega_k,x_0,0)\bigr). \tag{3.9}
\]
Moreover, using (3.2), we can assert that
\[
\lVert x_\varepsilon(s)-z\rVert
\leq C\left[
\frac{\varepsilon}{\varepsilon+\beta_{\omega_k}(T-\omega_k)}
+\frac{T-s}{\varepsilon+\beta_{\omega_k}(T-\omega_k)}
+(T-s)\right],\quad \omega_k<s<T, \tag{3.10}
\]
where $C$ is a constant independent of $\varepsilon$ and $k$, and $\beta_{\omega_k}(T-\omega_k)$ is the lowest eigenvalue of $M_{\omega_k}$. We define the function
\[
u_\varepsilon^k(s)=
\begin{cases}
0, & 0\leq s<\omega_k,\\
v_\varepsilon^k(s-\omega_k), & \omega_k\leq s\leq T.
\end{cases}
\]
We have
\[
y(t,x_0,u_\varepsilon^k)=\Phi(t,0)x_0+\int_0^t\Phi(t,s)f(s,y(s,x_0,u_\varepsilon^k))\,ds
+\int_0^t\Phi(t,s)B(s)u_\varepsilon^k(s)\,ds,
\]
for $0\leq t\leq T$. Using the uniqueness of solutions of (1.2)-(2.2), we obtain that $y(t,x_0,u_\varepsilon^k)=y(t,x_0,0)$ for $0\leq t\leq\omega_k$. Hence follows that
\[
y(\omega_k,x_0,0)=\Phi(\omega_k,0)x_0+\int_0^{\omega_k}\Phi(\omega_k,s)f(s,y(s,x_0,u_\varepsilon^k))\,ds,\quad k\in\mathbb{N}. \tag{3.11}
\]
\newpage
\noindent
In addition, for $\omega_k\leq t\leq T$, we have
\begin{align*}
y(t,x_0,u_\varepsilon^k) &=\Phi(t,0)x_0+\int_0^{\omega_k}\Phi(t,s)f(s,y(s,x_0,u_\varepsilon^k))\,ds
+\int_{\omega_k}^t\Phi(t,s)f(s,y(s,x_0,u_\varepsilon^k))\,ds\\
&+\int_{\omega_k}^t\Phi(t,s)B(s)u_\varepsilon^k(s)\,ds\\
=&\Phi(t,0)x_0+\Phi(t,\omega_k)\int_0^{\omega_k}\Phi(\omega_k,s)
f(s,y(s,x_0,u_\varepsilon^k))\,ds +\int_{\omega_k}^t\Phi(t,s)f(s,y(s,x_0,u_\varepsilon^k))\,ds\\
&+\int_0^{t-\omega_k}\Phi(t,\xi+\omega_k)B(\xi+\omega_k)v_\varepsilon^k(\xi)\,d\xi.
\end{align*}
Combining the last expression with (3.11), we obtain
\[
y(t,x_0,u_\varepsilon^k)=\Phi(t,\omega_k)y_k
+\int_{\omega_k}^t\Phi(t,s)f(s,y(s,x_0,u_\varepsilon^k))\,ds
+\int_0^{t-\omega_k}\Phi(t,\xi+\omega_k)B(\xi+\omega_k)v_\varepsilon^k(\xi)\,d\xi,
\]
for $\omega_k\leq t\leq T$. Replacing $x_\varepsilon^k(T-\omega_k)$ from (3.9), this yields that

\begin{align*}
y(T,x_0,u_\varepsilon^k)
=&\Phi(T,\omega_k)y(\omega_k,x_0,0)
+\int_{\omega_k}^T\Phi(T,s)f(s,y(s,x_0,u_\varepsilon^k))\,ds\\
&+\int_0^{T-\omega_k}\Phi(T,\xi+\omega_k)B(\xi+\omega_k)v_\varepsilon^k(\xi)\,d\xi\\
=&x_\varepsilon^k(T-\omega_k)
+\int_{\omega_k}^T\Phi(T,s)f(s,y(s,x_0,u_\varepsilon^k))\,ds.
\end{align*}
We now compare $y_\varepsilon^k(t)$ with the solution of the linear system

\begin{align*}
\psi'(t)&=A(t)\psi(t)+B(t)u_\varepsilon^k(t),\quad \omega_k\leq t\leq T,\\
\psi(\omega_k)&=y(\omega_k,x_0,0).
\end{align*}

Changing the variable $t=\tau+\omega_k$, we see that $\psi(t)=x_\varepsilon^k(t-\omega_k)$ for $\omega_k\leq t\leq T$. This allows us to
write
\[
y(t,x_0,u_\varepsilon^k)=\psi(t)+\int_{\omega_k}^t\Phi(t,s)f(s,y(s,x_0,u_\varepsilon^k))\,ds
\]
and, consequently,
\[
y(t,x_0,u_\varepsilon^k)-z=\psi(t)-z+\int_{\omega_k}^t\Phi(t,s)f(s,y(s,x_0,u_\varepsilon^k))\,ds. \tag{3.12}
\]
which implies
\[
\lVert y(t,x_0,u_\varepsilon^k)\rVert
\leq \lVert\psi(t)-z\rVert+\lVert z\rVert
+N_\Phi\int_{\omega_k}^t m_f(s)\Omega(\lVert y(s,x_0,u_\varepsilon^k)\rVert)\,ds
\]
for $\omega_k\leq t\leq T$. It follows from (3.10) that
\begin{align*}
\lVert y(\tau+\omega_k,x_0,u_\varepsilon^k)\rVert
\leq& C\left[
\frac{\varepsilon}{\varepsilon+\beta_{\omega_k}(T-\omega_k)}
+\frac{T-\omega_k-\tau}{\varepsilon+\beta_{\omega_k}(T-\omega_k)}
+(T-\omega_k-\tau)\right]+\lVert z\rVert\\
&+N_\Phi\int_0^\tau m_f(s+\omega_k)
\Omega(\lVert y(s+\omega_k,x_0,u_\varepsilon^k)\rVert)\,ds \tag{3.13}
\end{align*}
for $0\leq\tau\leq T-\omega_k$. As $\varepsilon>0$ is independent of $k$, we can take $\varepsilon=\gamma_k=T-\omega_k$. Substituting in
(3.13), we can write
\[
\lVert y(\tau+\omega_k,x_0,u_\varepsilon^k)\rVert
\leq C[2+(T-\omega_k-\tau)]+\lVert z\rVert
+N_\Phi\int_0^\tau m_f(s+\omega_k)
\Omega(\lVert y(s+\omega_k,x_0,u_\varepsilon^k)\rVert)\,ds,
\]
for $0\leq\tau\leq T-\omega_k$. Applying Lemma 2.11 we derive that
$\lVert y(\tau+\omega_k,x_0,u_\varepsilon^k)\rVert\leq N_1$ for $0\leq\tau\leq T-\omega_k$,
$k\in\mathbb{N}$, and for some constant $N_1>0$. On the other hand, since $\Omega(\cdot)$ is an absolutely continuous function,
\begin{align*}
\Omega(\lVert y(s+\omega_k,x_0,u_\varepsilon^k)\rVert)
-\Omega(\lVert y(s+\omega_k,x_0,u_\varepsilon^k)-z\rVert)
&=\int_{\lVert y(s+\omega_k,x_0,u_\varepsilon^k)-z\rVert}^{\lVert y(s+\omega_k,x_0,u_\varepsilon^k)\rVert}
\Omega'(\xi)\,d\xi\\
&\leq\int_{\lVert y(s+\omega_k,x_0,u_\varepsilon^k)-z\rVert}^{N_1}
\Omega'(\xi)\,d\xi\\
&\leq\Omega(N_1).
\end{align*}
We define
\[
\alpha_k(\tau)=\lVert y(\tau+\omega_k,x_0,u_\varepsilon^k)-z\rVert,
\quad 0\leq\tau\leq T-\omega_k. \tag{3.14}
\]
It follows from (3.10) and (3.12) that
\begin{align*}
\alpha_k(\tau)
\leq& C\left[
\frac{\varepsilon}{\varepsilon+\beta_{\omega_k}(T-\omega_k)}
+\frac{T-\omega_k-\tau}{\varepsilon+\beta_{\omega_k}(T-\omega_k)}
+(T-\omega_k-\tau)\right] +N_\Phi\int_0^\tau m_f(s+\omega_k)\,ds\,\Omega(N_1)\\
&+N_\Phi\int_0^\tau m_f(s+\omega_k)\Omega(\alpha_k(s))\,ds \tag{3.15}
\end{align*}
for $0\leq\tau\leq T-\omega_k$. Using again $\varepsilon=\gamma_k$ and substituting in (3.15), we can write
\[
\alpha_k(\tau)
\leq C_k+C\left[
2\frac{\gamma_k}{\beta_{\omega_k}(T-\omega_k)}
+(T-\omega_k-\tau)\right]
+N_\Phi\int_0^\tau m_f(s+\omega_k)\Omega(\alpha_k(s))\,ds, \tag{3.16}
\]
for $0\leq\tau\leq T-\omega_k$ and where $C_k\to 0$ as $k\to\infty$. Combining with condition (C5), we infer that
\[
\int_0^{T-\omega_k}m_f(s+\omega_k)\Omega(\alpha_k(s))\,ds\to 0
\quad\text{as }k\to\infty.
\]
Using now (3.16) with $\tau=T-\omega_k$, we obtain that
$\alpha_k(T-\omega_k)\to 0$ as $k\to\infty$. Substituting in
(3.14), we have that
\[
\lVert y(T,x_0,u_\varepsilon^k)-z\rVert
=\alpha_k(T-\omega_k)\to 0,\quad k\to\infty.
\]
Hence, $z\in\overline{R(T,x_0,f)}$, which shows that $R(T,x_0,f)$ is dense in $\mathbb{R}^n$. This completes the proof.
\end{proof}

\begin{example}
    We complete the text with an application. We apply our results to study the controllability of the system
    \begin{align*}
        x'(t)&=Ax(t)+f(x(t))+Bu(t),\quad 0\leq t\leq T, \tag{3.17}\\
        x(0)&=x_0\in\mathbb{R}^2, \tag{3.18}
    \end{align*}
where $A=\begin{pmatrix}0&1\\0&0\end{pmatrix}$, $B=\begin{pmatrix}0\\1\end{pmatrix}$ and $f:\mathbb{R}^2\to\mathbb{R}^2$ is a locally Lipschitz continuous function. It follows from the Kalman criterion that the linear system
\[
x'(t)=Ax(t)+Bu(t),\quad 0\leq t\leq T,
\]
is controllable. We will assume that the function $f$ satisfies the H\"older condition
\[
\lVert f(x)\rVert\leq\lVert x\rVert^\theta,\quad x\in\mathbb{R}^2,\quad 0<\theta<1/2.
\]
This implies that $f$ satisfies conditions (C1)-(C4) with $m_f(s)=1$ and $\Omega(\xi)=\xi^\theta$.
Applying our results in Section 2, we obtain that (3.17)-(3.18) has a unique solution. On the other hand, since $\Phi(t,0)=e^{tA}$, we have that $\Phi(s,\omega)=e^{(s-\omega)A}$ and, it follows from (2.6) that

\begin{align*}
M_\omega(\tau)
&=\int_\omega^{\omega+\tau}
\begin{pmatrix}
1&-(s-\omega)\\
0&1
\end{pmatrix}
\begin{pmatrix}
0\\
1
\end{pmatrix}
\begin{pmatrix}
0&1
\end{pmatrix}
\begin{pmatrix}
1&0\\
-(s-\omega)&1
\end{pmatrix}\,ds\\
&=
\begin{pmatrix}
\tau^3/3&-\tau^2/2\\
-\tau^2/2&\tau
\end{pmatrix}.
\end{align*}

By calculating the eigenvalues of $M_\omega(\tau)$, we obtain that
$\dfrac{\tau^2}{12}\dfrac{\tau}{\beta_{T-\tau}(\tau)}\leq 1$ for $0<\tau\leq\tau_0$ and some
$\tau_0>0$. The inequality (3.3) is reduced to
\[
\alpha_\tau(t)\leq C_\tau+C_1\frac{\tau}{\beta_{T-\tau}(\tau)}
+N\int_0^t\alpha_\tau(s)^\theta\,ds,\quad 0\leq t\leq\tau,
\]
where $N=\sup_{0\leq t\leq T}\lVert e^{tA}\rVert$, $C_1>0$ is a constant and $C_\tau\to 0$ as $\tau\to 0$. Replacing the previous
estimate for $\beta_{T-\tau}(\tau)$, we have
\[
\alpha_\tau(t)\leq C_\tau+C_1\frac{12}{\tau^2}
+N\int_0^t\alpha_\tau(s)^\theta\,ds,\quad 0\leq t\leq\tau,
\]
Proceeding as in Example 3.2 we can assert that condition (C5) holds. Theorem 3.3 allows us to
conclude that the system (3.17)-(3.18) is approximately controllable in the interval $[0,T]$.
\end{example}
\noindent

\end{document}